\newtheorem{theorem}{Theorem}[section]
\newtheorem{corollary}[theorem]{Corollary}
\newtheorem{proposition}[theorem]{Proposition}
\newtheorem{definition}[theorem]{Definition}
\numberwithin{equation}{section}
\begin{document} 
\small \begin{center}{\textit{ In the name of
			Allah, the Beneficent, the Merciful.}}\end{center}
\large

	\title{ON FINITE DIMENSIONAL ALGEBRAS WITH ONLY TRIVIAL DERIVATIONS(AUTOMORPHISMS) and SIMPLE ALGEBRAS} 
		\author{U.Bekbaev} 
	\thanks{{ uralbekbaev@gmail.com}}
	\begin{center} \address{Turin Polytechnic University in Tashkent, Tashkent, Uzbekistan}
	\end{center}	
	
	\begin{abstract}
		This paper deals with $n$-dimensional algebras, over any field, which have only trivial derivation (automorphism) and simple algebras. It is shown that the corresponding sets of algebras are not empty and, in algebraically closed field case, they are dense subsets of the variety of $n$-dimensional algebras with respect to the Zariski topology. Moreover, an inductive construction method is offered to create these kind of algebras as well.  In two-dimensional case a complete classifications, up to isomorphism, of such algebras are provided.\end{abstract}
	
	\maketitle
	Keywords: algebra, automorphism, derivation, simple algebra, matrix of structure constants.
		
		
		MSC: 17A36, 15A63

\section{Introduction}

Description of automorphisms and derivations of a given algebra is an essential part of the investigation of that algebra. Therefore there are many researches dealing with this problem. The topic of simple algebras is another point of interest for many researches as well.  

Consider the variety of $n$-dimensional algebras $\mathcal{A}=\mathcal{A}_n$ over any field $\mathbb{F}$. How many elements of $\mathcal{A}$ do have only trivial derivation(automorphism), how many  elements of $\mathcal{A}$ are simple?
This paper deals with these questions. It is shown that the corresponding sets of algebras are not empty and they are dense subsets of $\mathcal{A}$, in the Zariski topology, in the case of algebraically closed field $\mathbb{F}$. Moreover, for the each case an inductive method of construction of such algebras is provided and it is shown that the set of all algebras isomorphic to the constructed ones is a dense subset of $\mathcal{A}$ whenever the basic field $\mathbb{F}$ is algebraically closed.

Investigation the  relations among these three subsets is an interesting thing. In this paper some of such relations and applications of the main results are presented as well.

In two-dimensional case more accurate results can be provided. They are presented, without proofs, in the Appendix section. In that section         
a complete classification, up to isomorphism, of all elements $\mathcal{A}_2$, which have only trivial derivation(trivial automorphism, are simple), is given.
By the use of given classification results one can find out relations among the tree subsets in question. 

A motivation to us, for the looking at these problems in the presented form,  were the classification of all two-dimensional algebras, description of their derivations(automorphisms) and local, 2-local derivations(automorphisms)  \cite{ahmed1,ahmed2,bekbaev23}. 

The paper is  organized in the following way. In section 3, which comes after the Introduction and Preliminaries sections, we deal with algebras which have only trivial derivation. Sections 4 and 5 deal with algebras which have only trivial automorphism and are simple, respectively. In section 6 we consider some applications and in section 7 we provide, in two-dimensional case, final answers to some problems considered in the previous sections.

\section{Preliminaries} Let $\mathbb{F}$ be any field. By an algebra 
$\mathbb{A}$ over $\mathbb{F}$ we mean vector space $\mathbb{A}$ with a fixed bilinear map $\cdot :\mathbb{A}\times\mathbb{A}\rightarrow \mathbb{A}$ and use $x\cdot y=xy$ for the value of this bilinear map at $(x,y)\in \mathbb{A}\times\mathbb{A}$. Let $Lin(\mathbb{A})$ stand for the set all linear maps from $\mathbb{A}$ to $\mathbb{A}$, as a vector space over $\mathbb{F}$.

\begin{definition} An element $g\in Lin(\mathbb{A})$ ($D\in Lin(\mathbb{A})$) is said to be an automorphism (respectively,  a derivation) if $g$ is invertible and for any $x,y\in \mathcal{A}$ the equality $g(xy)=g(x)g(y)$  (respectively, $D(xy)=D(x)y+xD(y)$) holds true.\end{definition}

We use notation $Der(\mathbb{A})$ ($Aut(\mathbb{A})$) for the set of all derivations(respectively, automorphisms) of the algebra $\mathbb{A}$.

Further we consider only $n$ dimensional algebras and $I_n=I$ stands for the identity matrix, where $n$ is a fixed natural number. We use $B\otimes C$ for the tensor product of matrices $B=(b_{ij})$ and $C$, where $B\otimes C=(b_{ij}C)$.

If $\mathbb{A}$ is an algebra and $e=(e_1,e_2,...,e_n)$ is a linear basis for $\mathbb{A}$ over $\mathbb{F}$, as a vector space, then  one can attach  to  $\mathbb{A}$ a $n \times n^2$ size matrix $\mathbb{A}_e$ (called the matrix of structure constants of $\mathbb{A}$ with respect to the basis $e=(e_1,e_2,...,e_n)$, shortly MSC) $$A_e=\left(\begin{array}{ccccccccccccc}a_{11}^1&a_{12}^1&...&a_{1n}^1&a_{21}^1&a_{22}^1&...&a_{2n}^1&...&a_{n1}^1&a_{n2}^1&...&a_{nn}^1\\ a_{11}^2&a_{12}^2&...&a_{1n}^2&a_{21}^2&a_{22}^2&...&a_{2n}^2&...&a_{n1}^2&a_{n2}^2&...&a_{nn}^2 \\
	...&...&...&...&...&...&...&...&...&...&...&...&...\\ a_{11}^n&a_{12}^n&...&a_{1n}^n&a_{21}^n&a_{22}^n&...&a_{2n}^n&...&a_{n1}^n&a_{n2}^n&...&a_{nn}^n\end{array}\right)$$ as follows
$$e_i \cdot e_j=\sum\limits_{k=1}^n a_{ij}^ke_k, \ \mbox{where}\ i,j=1,2,...,n.$$
Note that under a change of the basis $f=eg^{-1}$ the MSC $A_e$ is changed according to the rule $$A_f=gA_e(g^{-1})^{\otimes 2}.$$
In this way the variety of $n$-dimensional algebras $\mathcal{A}$ becomes $Mat(n\times n^2,\mathbb{F}).$

Further it is assumed that a basis $e$ is fixed, we don't make difference between an algebra $\mathbb{A}$ and its MSC $A_e=A$, as well as between a derivation(an automorphism) and its matrix, with respect to this basis, elements of $\mathbb{A}$ in this basis will be presented as elements of $\mathbb{F}^n$ in column form. 

We use notations $A=(A_1|A_2|...|A_n]$, where $e_ie=eA_i$ at $i=1,2,...,n$, that is,	
$$A_1=\left(\begin{array}{cccc}a_{11}^1&a_{12}^1&...&a_{1n}^1\\
	a_{11}^2&a_{12}^2&...&a_{1n}^2\\
	...&...&...&...\\ a_{11}^n&a_{12}^n&...&a_{1n}^n\end{array}\right),\
..., A_n=\left(\begin{array}{cccc}a_{n1}^1&a_{n2}^1&...&a_{nn}^1\\
	a_{n1}^2&a_{n2}^2&...&a_{nn}^2\\
	...&...&...&...\\ a_{n1}^n&a_{n2}^n&...&a_{nn}^n\end{array}\right),$$
$tr(A_i)$ means the ordinary trace of $A_i$.

Further we use the following fact that $D\in Der(\mathbb{A})$ ($g\in Aut(\mathbb{A})$)  if and only if $DA=A(D\otimes I+I\otimes D)$ 
(respectively, $gA=A(g\otimes g)$ and $\det(g)\neq 0$), which can be easily derived from the identity 
$$\mathbf{u}\mathbf{v}=eA(u\otimes v),$$ where
$\mathbf{u}=eu, \mathbf{v}=ev$, $u,v\in\mathbb{F}^n$  .

\section{Derivations}
The following result is about algebras which have only trivial derivation. Let $\mathcal{A}_{n, triv.der}$ stand for the set of all such algebras, that is $\mathcal{A}_{n, triv.der}=\{\mathbb{A}\in\mathcal{A} :\ Der(\mathbb{A})=\{0\}\}$
\begin{theorem} The following statements on $\mathcal{A}_{n, triv.der}$  are true\\
	1. $\mathcal{A}_{n, triv.der}\neq \emptyset$.\\
	2. If $\mathbb{F}$ is algebraically closed then $\mathcal{A}_{n, triv.der}$ is an open subset of $\mathcal{A}$, in Zariski topology, and therefore it is dense in  $\mathcal{A}$.\end{theorem}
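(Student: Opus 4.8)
The plan is to recast the condition $Der(\mathbb{A})=\{0\}$ as a rank condition on a matrix depending linearly on $A$, and then to exhibit one explicit algebra that realizes it over an arbitrary field. By the criterion recalled in the Preliminaries, a matrix $D$ lies in $Der(\mathbb{A})$ exactly when $DA=A(D\otimes I+I\otimes D)$. Fixing $A$, this means $Der(\mathbb{A})$ is the kernel of the $\mathbb{F}$-linear operator
\[
\Phi_A\colon Mat(n\times n,\mathbb{F})\longrightarrow Mat(n\times n^2,\mathbb{F}),\qquad \Phi_A(D)=DA-A(D\otimes I+I\otimes D).
\]
Expressing $\Phi_A$ in the standard bases yields a matrix $M(A)$ of size $n^3\times n^2$ whose entries are linear forms in the $n^3$ entries of $A$, since both $DA$ and $A(D\otimes I+I\otimes D)$ are bilinear in $(A,D)$. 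As the domain has dimension $n^2$, we get the equivalence
\[
Der(\mathbb{A})=\{0\}\iff \Phi_A \text{ is injective}\iff \operatorname{rank}M(A)=n^2,
\]
so $\mathcal{A}_{n, triv.der}$ is precisely the full-column-rank locus of $M(A)$.

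For statement 2 I would argue as follows. We have $\operatorname{rank}M(A)=n^2$ if and only if at least one of the $n^2\times n^2$ minors of $M(A)$ is nonzero; each such minor is a polynomial in the entries of $A$. Hence the set on which all of them vanish is Zariski closed in $\mathcal{A}\cong Mat(n\times n^2,\mathbb{F})$, and its complement, which by the previous paragraph equals $\mathcal{A}_{n, triv.der}$, is Zariski open. Since $\mathbb{F}$ is algebraically closed, $\mathcal{A}\cong\mathbb{F}^{n^3}$ is irreducible, so every nonempty open subset is dense; thus density follows the moment nonemptiness is known. It is the algebraically closed assumption that is used here, through irreducibility (a product of two not-identically-zero polynomials stays not identically zero on $\mathbb{F}^{n^3}$), whereas the openness itself needs no hypothesis on $\mathbb{F}$.

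It remains to establish statement 1, $\mathcal{A}_{n, triv.der}\neq\emptyset$, which I regard as the real content. I would take the explicit algebra $\mathbb{A}=\mathbb{F}^n$ defined by $e_ie_j=\delta_{ij}e_i$ (the direct product of $n$ copies of $\mathbb{F}$), whose only nonzero structure constants are $a_{ii}^i=1$. Written componentwise, the relation $DA=A(D\otimes I+I\otimes D)$ reads
\[
\sum_k a_{ij}^k d_{mk}=\sum_p d_{pi}a_{pj}^m+\sum_q d_{qj}a_{iq}^m\qquad(\text{for all }i,j,m),
\]
which for this $A$ collapses to $\delta_{ij}d_{mi}=\delta_{jm}d_{mi}+\delta_{im}d_{mj}$. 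Taking $i=j\neq m$ forces $d_{mi}=0$ off the diagonal, and taking $i=j=m$ forces $d_{ii}=2d_{ii}$, hence $d_{ii}=0$; note that this last cancellation gives $d_{ii}=0$ in every characteristic. Therefore $D=0$ and $\mathbb{F}^n\in\mathcal{A}_{n, triv.der}$.

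The main obstacle is exactly statement 1: pinning down a single member of $\mathcal{A}_{n, triv.der}$ that works uniformly over an arbitrary field, including characteristic $2$; everything in statement 2 is then formal linear algebra plus irreducibility of affine space. The example above settles this cleanly, the only point requiring care being the characteristic-free nature of $d_{ii}=2d_{ii}\Rightarrow d_{ii}=0$. An alternative route, matching the inductive theme announced in the paper, would build such algebras one dimension at a time and check that the newly adjoined basis vector annihilates any prospective derivation; there the inductive verification step is where the work concentrates.
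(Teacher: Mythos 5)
Your proposal is correct and follows essentially the same route as the paper: the same reduction of $Der(\mathbb{A})=\{0\}$ to the full-rank condition $\operatorname{rank}M(A)=n^2$ (giving Zariski-openness via minors and density via irreducibility of affine space), and the same witness algebra $e_ie_j=\delta_{ij}e_i$, for which the computation forces $d_{ij}=0$ off the diagonal and $d_{ii}=2d_{ii}$, hence $D=0$ in every characteristic. Your write-up merely makes explicit some points the paper leaves implicit (the minors argument and the observation that only density, not openness, uses algebraic closedness).
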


\begin{proof} Let $A$ be the MSC of an algebra $\mathbb{A}$, $X\in Lin(\mathbb{A})$ be the matrix of a derivation from $Der(\mathbb{A})$, that is the equality $XA=A(X\otimes I+I\otimes X)$ is true. It is known that this equality can be represented in the form
	$M(A)vec(X)=0$, where $M(A)$ is $n^3\times n^2$ size matrix depending on $A$, $vec(X)$ is $X$ written in a column matrix form.
	
	The set $\{A: rank(M(A))=n^2\}$ represent all $n$-dimensional algebras $\mathbb{A}$ for which $Der(\mathbb{A})=\{0\}$ and it is an open subset of the variety of $n$-dimensional algebras in algebraically closed field $\mathbb{F}$ case. Therefore to complete the proof of the theorem it is enough to show that $\{A: rank(M(A))=n^2\}$ is not empty. For that consider the following (commutative, associative) algebra $\mathbb{A}$ defined by 
	\[e_i^2=e_i\ \mbox{whenever}\  i=1,2,...,n,\ e_ie_j=0\ \mbox{whenever}\  i\neq j.\]
	
	If $D\in Der(\mathbb{A})$ and $D(e_i)=\sum_{j=1}^nd_{ji}e_j$ then, due to $D(e_i)= D(e_i^2)=2D(e_i)e_i$, one has $ \sum_{j=1}^nd_{ji}e_j=2\sum_{j=1}^nd_{ji}e_je_i=2d_{ii}e_i$. Therefore $d_{ii}=2d_{ii}$ and $d_{ij}=0$ whenever $i\neq j$, that is $D=0$.\end{proof}

The following theorem is about a construction of $n$-dimensional algebra, which has only trivial derivation, by the use of a $n-1$- dimensional algebra which has the same property.  

\begin{theorem} Let $A'=(A'_2\vert A'_3\vert...\vert A'_n)\in Mat((n-1)\times (n-1)^2,\mathbb{F})$, $A'_i\in Mat((n-1)\times (n-1),\mathbb{F})$, where $i=2,3,...n$, be any matrices  with the following properties\\ a) if $D'A'=A'(D'\otimes I_{n-1}+I_{n-1}\otimes D')$ for some $D'\in Mat((n-1)\times (n-1),\mathbb{F})$ then $D'=0$,\\
	b) rank of the block-column matrix $A'_{bl}$ with blocks $A'_i+tr(A'_i)I_{n-1}$, where $i=2,3,...,n$, is $n-1$.
	
	In this case $A=(A_1\vert A_2\vert ...\vert A_n)\in Mat(n\times n^2,\mathbb{F})$, where $A_1\in Mat(n\times n,\mathbb{F})$, $tr(A_1)\neq 0$, $A_i=\begin{pmatrix}a^1_{i1}&0\\ \overline{a}_{i1}&A'_i\end{pmatrix}$, $a^1_{i1}=-tr(A'_i)$ and $\overline{a}_{i1}\in \mathbb{F}^{n-1}$ are any column vectors for all $i\in \{2,3,...,n\}$, has property: if $DA=A(D\otimes I+I\otimes D)$ for some $D\in Mat(n\times n,\mathbb{F})$ then $D=0$. \end{theorem}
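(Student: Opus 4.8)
The plan is to analyze the derivation identity $DA=A(D\otimes I+I\otimes D)$ for the constructed $A$ entry by entry, organized by the block splitting $\mathbb{F}e_1\oplus\langle e_2,\dots,e_n\rangle$, and to extract three independent pieces of information that kill, in turn, the first row of $D$, the lower-right $(n-1)\times(n-1)$ block of $D$, and the first column of $D$. Writing $D(e_j)=\sum_p d_{pj}e_p$, I decompose
\[
D=\begin{pmatrix} d_{11} & r\\ c & D'\end{pmatrix},\qquad r=(d_{12},\dots,d_{1n}),\ c=(d_{21},\dots,d_{n1})^{\mathsf T},\ D'=(d_{pq})_{p,q\ge 2},
\]
and aim to show $r=0$, $d_{11}=0$, $D'=0$, and $c=0$. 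The decisive preliminary observation is that the construction forces $tr(A_i)=a^1_{i1}+tr(A'_i)=-tr(A'_i)+tr(A'_i)=0$ for every $i\ge 2$, while $tr(A_1)\neq 0$ by hypothesis; since $A_i$ is exactly the matrix of left multiplication $L_{e_i}$ by $e_i$, this means the trace functional $\tau(x):=tr(L_x)$ vanishes on $e_2,\dots,e_n$ and is nonzero on $e_1$, so that $\ker\tau=\langle e_2,\dots,e_n\rangle$.

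First I would dispose of the first row of $D$ by a trace argument independent of all the remaining data. From the derivation law one gets the operator identity $L_{D(x)}=DL_x-L_xD=[D,L_x]$ for every $x$, whence $\tau(D(x))=tr([D,L_x])=0$ because the trace of a commutator vanishes over any field. Applying this to $x=e_j$ gives $0=\tau(D(e_j))=\sum_p d_{pj}\,tr(A_p)=d_{1j}\,tr(A_1)$, and since $tr(A_1)\neq 0$ I conclude $d_{1j}=0$ for all $j$. In block terms this is precisely $r=0$ and $d_{11}=0$, so after this step $D=\begin{pmatrix}0&0\\ c&D'\end{pmatrix}$.

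Next I would read off the subalgebra. Restricting the component form of $DA=A(D\otimes I+I\otimes D)$ to indices $i,j,k\in\{2,\dots,n\}$ and using the block shape $A_i=\begin{pmatrix}a^1_{i1}&0\\ \overline{a}_{i1}&A'_i\end{pmatrix}$ together with $a^1_{ij}=0$ for $i,j\ge 2$, the surviving terms are the subalgebra structure constants plus exactly two extra contributions, one proportional to $d_{1i}$ and one proportional to $d_{1j}$. Having already shown $d_{1i}=d_{1j}=0$, these extra terms vanish and the identity collapses to $D'A'=A'(D'\otimes I_{n-1}+I_{n-1}\otimes D')$. Hypothesis (a) then gives $D'=0$, leaving $D=\begin{pmatrix}0&0\\ c&0\end{pmatrix}$.

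Finally I would eliminate the first column $c$ using property (b). Reading the component identity for $j=1$, $i,k\ge 2$ and substituting the information $r=0$, $d_{11}=0$, $D'=0$ already obtained, every term drops out except $-tr(A'_i)\,d_{k1}$ on one side and $\sum_{q\ge 2}d_{q1}(A'_i)_{kq}$ on the other; this is the coordinate form of $(A'_i+tr(A'_i)I_{n-1})\,c=0$ for each $i=2,\dots,n$, i.e. $A'_{bl}\,c=0$. Since $A'_{bl}$ has rank $n-1$ by (b), its columns are independent and $c=0$, so $D=0$ as claimed. The conceptual heart of the argument is the trace identity of the first step, which trivializes the otherwise awkward first row; the part demanding most care is the purely mechanical index bookkeeping in the middle step, where one must verify that the only cross terms coupling the subalgebra block to the first coordinate are the two that vanish once $r=0$ is known.
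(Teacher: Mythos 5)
Your proof is correct and takes essentially the same route as the paper's: the paper also first kills the first row of $D$ by a trace argument (your identity $L_{D(x)}=[D,L_x]$ is precisely the paper's equation $DA_i=A_iD+\sum_j A_jd_{ji}$, and the vanishing trace of the commutator is the paper's observation $tr(DA_i)=tr(A_iD)$ combined with $tr(A_j)=0$ for $j\geq 2$), then uses hypothesis (a) on the lower-right block to get $D'=0$, and finally hypothesis (b) via $(A'_i+tr(A'_i)I_{n-1})\mathbf{d}=0$ to kill the first column. The only difference is presentational: you verify the identity entry-by-entry in the structure constants, while the paper manipulates the $n\times n$ block matrix equations directly.
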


\begin{proof} Let $(d_{ij})=D\in Mat(n,\mathbb{F})$ and $DA=A(D\otimes I+I\otimes D)$. The last equality is equivalent to  $DA_i=A_iD+\sum_{j=1}^nA_jd_{ji}$, where $i=1,2,...,n$. Therefore one has  
	$tr(DA_i)=tr(A_iD)+\sum_{j=1}^ntr(A_j)d_{ji}=tr(A_iD)+tr(A_1)d_{1i}$, that is $d_{1i}=0$, whenever $i=1,2,...,n$, so
	$D=\begin{pmatrix}0&0\\ \mathbf{d}&D'\end{pmatrix}$. Moreover $DA_i=A_iD+\sum_{j=1}^nA_jd_{ji}$, means that 
	\[\begin{pmatrix}0&0\\ \mathbf{d}&D'\end{pmatrix}\begin{pmatrix}a^1_{i1}&0\\ \overline{a}_{i1}&A'_i\end{pmatrix}=\begin{pmatrix}a^1_{i1}&0\\ \overline{a}_{i1}&A'_i\end{pmatrix}\begin{pmatrix}0&0\\ \mathbf{d}&D'\end{pmatrix}+\sum_{j=1}^n\begin{pmatrix}a^1_{j1}&0\\ \overline{a}_{j1}&A'_j\end{pmatrix}d_{ji}\] and it can be represented in the following form 
	$$\begin{pmatrix} 0&0\\ a^1_{i1}\mathbf{d}+D'\overline{a}_{i1}&D'A'_i\end{pmatrix}=\begin{pmatrix}0&0\\ A'_i\mathbf{d}&A'_iD'\end{pmatrix}+\sum_{j=2}^n\begin{pmatrix}a^1_{j1}&0\\ \overline{a}_{j1}&A'_j\end{pmatrix}d_{ji}.$$ In particular, if $i>1$ then $D'A'_i=A'_iD'+\sum_{j=2}^nA'_jd'_{ji}$, so $D'=0$. Once again, in particular,   
	$(A'_i-a^1_{i1}I_{n-1})\mathbf{d}=(A'_i+tr(A'_i)I_{n-1})\mathbf{d}=0$ for all $i=2,...,n$, that is, $A'_{bl}\mathbf{d}=0$. So $\mathbf{d}=0$ due to  $rk(A'_{bl})=n-1$. It means that $D=0$. 
\end{proof}

Let $\mathcal{A}_{2,constr.der}$ stand for the set of all two-dimensional algebras given by MSC   $$A=\left(
\begin{array}{cccc}
	\alpha_1 & \alpha_2 &1+\alpha_2 & \alpha_4 \\
	\beta_1 & -\alpha_1 & 1-\alpha_1 & -\alpha_2
\end{array}\right),\ \mbox{where}\ \mathbf{c}=(\alpha_1, \alpha_2, \alpha_4, \beta_1) \in \mathbb{F}^4.$$ All these algebras have only trivial derivation and trivial automorphism, see \cite{eshmirzaev24}.  
By the use of an element $A'\in\mathcal{A}_{2,constr.der}$, for which rank of $A'_{bl}$ is $2$, and method given in Theorem 3.2 one can construct $3$-dimensional algebra $A$ with trivial derivation. Let $\mathcal{A}_{3,constr.der}$ stand for the set of such obtained $3$-dimensional algebras. In similar way one can use $\mathcal{A}_{3,constr.der}$ to construct the set of $4$-dimensional algebras $\mathcal{A}_{4,constr.der}$, and etcetera, one can construct $\mathcal{A}_{n,constr.der}$ elements of which have only trivial derivation.

The next result states that the set of all algebras, isomorphic to at least one the elements of $\mathcal{A}_{n,constr.der}$, is a dense subset of the variety of $n$-dimensional algebras if the basic field  
$\mathbb{F}$ is algebraically closed. 

\begin{proposition}  In the case of algebraically closed $\mathbb{F}$ the set $$\{g^{-1}Ag^{\otimes 2}: A\in\mathcal{A}_{n,constr.der}, g\in GL(n,\mathbb{F})\}$$ is a dense subset of $\mathcal{A}$.
\end{proposition}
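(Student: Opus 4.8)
The plan is to prove the statement by induction on $n$, reducing density to a single dimension count that is forced to close up by promoting, through the inductive hypothesis, the $(n-1)$-dimensional building block from the thin family $\mathcal{A}_{n-1,constr.der}$ to a full-dimensional family of $(n-1)$-dimensional algebras. Write $\mathcal{O}_n=\{g^{-1}Ag^{\otimes 2}:A\in\mathcal{A}_{n,constr.der},\ g\in GL(n,\mathbb{F})\}$. Since every $A\in\mathcal{A}_{n,constr.der}$ has $Der(\mathbb{A})=\{0\}$ by Theorem 3.2 and this property is an isomorphism invariant, $\mathcal{O}_n\subseteq\mathcal{A}_{n,triv.der}$, which by Theorem 3.1 is open and dense in the irreducible variety $\mathcal{A}=Mat(n\times n^2,\mathbb{F})$. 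Hence it suffices to exhibit inside $\overline{\mathcal{O}_n}$ an irreducible constructible set of full dimension $n^3$: a closed set of dimension $n^3$ inside the irreducible $\mathcal{A}$ is all of $\mathcal{A}$.

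For the base case $n=2$ the family $\mathcal{A}_{2,constr.der}$ is four-dimensional with trivial derivation; as $\dim Aut(\mathbb{A})\le\dim Der(\mathbb{A})=0$, the stabiliser is finite and the generic orbit has dimension $4$, so $\dim\mathcal{O}_2=4+4=8=\dim\mathcal{A}_2$, which is the content of the cited classification. For the inductive step I would first record the compatibility of the construction of Theorem 3.2 with the block-diagonal copy of $GL(n-1,\mathbb{F})$ in $GL(n,\mathbb{F})$: conjugating a constructed algebra $C(A',A_1,\overline{a})$ by $\mathrm{diag}(1,g')$ yields $C(g'^{-1}A'g'^{\otimes 2},\widetilde{A_1},\widetilde{\overline{a}})$, with the free data $A_1,\overline{a}$ merely relabelled. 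Consequently $\mathcal{O}_n$ contains every $C(A',A_1,\overline{a})$ whose block $A'$ lies in $GL(n-1,\mathbb{F})\cdot\mathcal{A}_{n-1,constr.der}$, which by the inductive hypothesis is dense in $\mathcal{A}_{n-1}$. Passing to closures, $\overline{\mathcal{O}_n}$ therefore contains $\overline{GL(n,\mathbb{F})\cdot\mathcal{F}}$, where $\mathcal{F}=\{C(A',A_1,\overline{a}):A'\in\mathcal{A}_{n-1}\ \text{generic},\ tr(A_1)\neq0,\ \overline{a}\ \text{free}\}$ is the family obtained by allowing a full-dimensional building block.

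The decisive count is then carried out for $\mathcal{F}$. By inspection of the block shape in Theorem 3.2, $\mathcal{F}$ is cut out in $\mathcal{A}$ by the $(n-1)^2$ equations $a^1_{ik}=0$ $(2\le i,k\le n)$ together with the $n-1$ equations $a^1_{i1}+\sum_{k=2}^{n}a^k_{ik}=0$ $(2\le i\le n)$; these $n(n-1)$ independent linear conditions give $\dim\mathcal{F}=n^3-n^2+n$. For generic $A\in\mathcal{F}$ one again has $Der(\mathbb{A})=\{0\}$ by Theorem 3.2, so the orbit has full dimension $n^2$. Considering the action morphism $GL(n,\mathbb{F})\times\mathcal{F}\to\mathcal{A}$, its generic fibre over a point $B$ is isomorphic to $\{g:\psi_g(B)\in\mathcal{F}\}$, whose dimension equals $\dim(\mathrm{orbit}(B)\cap\mathcal{F})$ because the orbit map has finite fibres. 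Transversality of the orbit to $\mathcal{F}$ would give this intersection dimension $n^2+(n^3-n^2+n)-n^3=n$, whence $\dim\bigl(GL(n,\mathbb{F})\cdot\mathcal{F}\bigr)=n^2+(n^3-n^2+n)-n=n^3$. As this set is the irreducible image of $GL(n,\mathbb{F})\times\mathcal{F}$ and is contained in $\overline{\mathcal{O}_n}$, the induction closes.

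The heart of the matter, and the step I expect to be the main obstacle, is precisely this transversality: one must check that at a generic constructed $A$ the orbit tangent space $\{A(X\otimes I+I\otimes X)-XA:X\in Mat(n,\mathbb{F})\}$ surjects onto the $(n^2-n)$-dimensional normal space of $\mathcal{F}$, equivalently that $\mathrm{orbit}(A)\cap\mathcal{F}$ has dimension exactly $n$ and not more. The triviality of $Der(\mathbb{A})$ guarantees that the orbit tangent attains the full dimension $n^2$, but it does not by itself prevent that tangent from lying too tangentially to $\mathcal{F}$; I would settle this by exhibiting one explicit member of $\mathcal{F}$ at which the relevant Jacobian has rank $n^3$ and then invoking upper semicontinuity of the fibre dimension. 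The remaining ingredient, the block-diagonal compatibility of the construction, is a direct matrix computation from the change-of-basis rule $A_f=gA_e(g^{-1})^{\otimes 2}$.
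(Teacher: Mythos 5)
Your route is genuinely different from the paper's (the paper argues by polynomial elimination: a polynomial $f$ vanishing on the orbit set is claimed to be independent of the block $X_1$ because $A_1$ is essentially free, and then basis permutations are used to kill the remaining blocks one by one). But the step you yourself flag as the main obstacle is not merely unproven --- it is false for $n\ge 3$, and your own block-diagonal compatibility lemma is what kills it. Since conjugation by $\mathrm{diag}(1,g')$ maps $\mathcal{F}$ into $\mathcal{F}$ (it moves $A'$ along its $GL(n-1,\mathbb{F})$-orbit and merely relabels $A_1,\overline{a}$), the intersection $\mathrm{orbit}(B)\cap\mathcal{F}$ contains the full $\mathrm{diag}(1,GL(n-1,\mathbb{F}))$-sweep of any of its points; the stabiliser of such a point sits inside $Aut$, which is $0$-dimensional at generic points of the image, so $\dim\bigl(\mathrm{orbit}(B)\cap\mathcal{F}\bigr)\ge (n-1)^2$, and $(n-1)^2>n$ as soon as $n\ge 3$. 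Hence transversality holds at no point of $\mathcal{F}$, the Jacobian you hope to exhibit never has rank $n^3$, and your own fibre formula gives $\dim\bigl(GL(n,\mathbb{F})\cdot\mathcal{F}\bigr)\le n^2+(n^3-n^2+n)-(n-1)^2=n^3+n-(n-1)^2<n^3$ for $n\ge 3$. The inductive step cannot close.

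What your computation has actually uncovered is an obstruction to the statement itself, not just to your proof of it. Every algebra in $\mathcal{A}_{n,constr.der}$, and hence every algebra in its $GL(n,\mathbb{F})$-saturation, has the span of $e_2,\dots,e_n$ (respectively its image under the basis change) as an $(n-1)$-dimensional subalgebra. The incidence variety $\{(A,H): H\cdot H\subseteq H\}$ over the $\mathbb{P}^{n-1}$ of hyperplanes shows that the locus of $n$-dimensional algebras admitting a codimension-one subalgebra has dimension at most $n^3-(n-1)^2+(n-1)=n^3-(n-1)(n-2)$, which is strictly less than $n^3$ for $n\ge 3$; even more crudely, $\dim\mathcal{A}_{3,constr.der}=9+4+4=17$ and $17+\dim GL(3,\mathbb{F})=26<27$, so the set in the Proposition cannot be dense already for $n=3$. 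Density is consistent only with $n=2$, where the normalisation entries $1+\alpha_2$, $1-\alpha_1$ in $A_1(\mathbf{c})$ rigidify the family (no $\mathrm{diag}(1,t)$-sweep survives) and the count closes at $4+4=8$; your base case is fine modulo quoting the classification to get finite fibres. The same obstruction shows the paper's own argument cannot be valid either: its inference that $f$ vanishing on the constructed set forces $f$ to be independent of $X_1$ is a non sequitur ($f$ could be divisible by an equation of the constrained blocks, e.g.\ an entry $x^1_{ij}$, $i,j\ge 2$, times anything in $X_1$), which is precisely where a correct proof would have to, and cannot, get around the subalgebra obstruction.
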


\begin{proof} Let $X_i$ stand for the set of variables $(x_{ij}^k)_{j,k=1,2,...n}$. Let $f[X_1\vert X_2\vert...\vert X_n]$ be any polynomial over $\mathbb{F}$ and such that $f[g^{-1}Ag^{\otimes 2}]=0$ whenever $g\in GL(n,\mathbb{F})$ and  $A\in \mathcal{A}_{n,constr.der}$.
	In particular, $f[A]=0$ and due to the fact that for $A_1$ one has only the constraint $tr(A_1)\neq 0$ one can conclude that $f$ doesn't depend on $X_1$, that is $f[X_1\vert X_2\vert...\vert X_n]=f[X_2\vert X_3\vert...\vert X_n]$. But the change of the basis $e=(e_1,e_2,...,e_n$ to 
	$e'=(e_2,e_1,...,e_n$, roughly speaking, changes the old $A_2$ to old $A_1$ and therefore $f[X_2\vert X_3\vert...\vert X_n]$ does not depend on $X_2$ as well, so $f[X_2\vert X_3\vert...\vert X_n]=f[X_3\vert X_4\vert...\vert X_n]$ and et cetera. \end{proof}
\section{Automotphisms}
In this section we deal with algebras which have only trivial automorphism, provide an inductive construction method of making such algebras.
\begin{theorem} Let $A'=(A'_2\vert A'_3\vert...\vert A'_n)\in Mat((n-1)\times (n-1)^2,\mathbb{F})$, $A'_i\in Mat((n-1)\times (n-1),\mathbb{F})$, where $i=2,3,...,n,$ be any matrices  with the following properties\\ a) if $g'A'=A'(g'\otimes g')$ for some $g'\in GL(n-1,\mathbb{F})$ then $g'=I_{n-1}$,\\
	b)  rank of the block-column matrix $A'_{bl}$ with blocks $A'_i+tr(A'_i)I_{n-1}$, where $i=2,3,...,n$, is $n-1$
	
	In this case $A=(A_1\vert A_2\vert ...\vert A_n)\in Mat(n\times n^2,\mathbb{F})$, where $A_1\in Mat(n\times n,\mathbb{F})$, $tr(A_1)\neq 0$, $A_i=\begin{pmatrix}a^1_{i1}&0\\ \overline{a}_{i1}&A'_i\end{pmatrix}$, $a^1_{i1}=-tr(A'_i)$ and $\overline{a}_{i1}\in \mathbb{F}^{n-1}$ are any column vectors for all $i\in \{2,3,...,n\}$, has property: if $gA=A(g\otimes g)$ for some $g\in GL(n,\mathbb{F})$ then $g=I$. \end{theorem}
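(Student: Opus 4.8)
The plan is to mirror the proof of Theorem 3.2, replacing each linear (derivation) identity by its multiplicative (automorphism) counterpart. Writing $g=(g_{ij})$, I translate the hypothesis $gA=A(g\otimes g)$ into the block-column equations $gA_i=\sum_{j=1}^n g_{ji}A_jg$ for $i=1,2,\dots,n$; since $g$ is invertible these are equivalent to $gA_ig^{-1}=\sum_{j=1}^n g_{ji}A_j$. The overall goal is to peel off $g$ one block at a time: first its top row, then its bottom-right $(n-1)\times(n-1)$ block $g'$, then its first column $\mathbf{g}$.

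First I would pin down the top row of $g$. This is the one step where the automorphism proof genuinely departs from the derivation proof: instead of the linear trace identity used for $D$, I take the trace of $gA_ig^{-1}=\sum_j g_{ji}A_j$ and use conjugation-invariance of the trace to get $\operatorname{tr}(A_i)=\sum_{j=1}^n g_{ji}\operatorname{tr}(A_j)$. Since $a^1_{j1}=-\operatorname{tr}(A'_j)$ forces $\operatorname{tr}(A_j)=a^1_{j1}+\operatorname{tr}(A'_j)=0$ for every $j\ge 2$, while $\operatorname{tr}(A_1)\neq 0$, this collapses to $\operatorname{tr}(A_i)=g_{1i}\operatorname{tr}(A_1)$. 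Taking $i=1$ gives $g_{11}=1$ and taking $i\ge 2$ gives $g_{1i}=0$. Hence $g=\begin{pmatrix}1&0\\ \mathbf{g}&g'\end{pmatrix}$ with $\mathbf{g}\in\mathbb{F}^{n-1}$; and since $\det g=\det g'$, the block $g'$ lies in $GL(n-1,\mathbb{F})$, which is exactly what is needed to invoke hypothesis (a).

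Next I would recover $g'=I_{n-1}$. For $i\ge 2$ the vanishing $g_{1i}=0$ deletes the $A_1$ term, so $\sum_j g_{ji}A_j=\sum_{j=2}^n g_{ji}A_j$ involves only the block-triangular matrices $A_j=\begin{pmatrix}a^1_{j1}&0\\ \overline{a}_{j1}&A'_j\end{pmatrix}$. Multiplying out $gA_i=\bigl(\sum_{j\ge2}g_{ji}A_j\bigr)g$ in block form and reading off the bottom-right block gives $g'A'_i=\sum_{j=2}^n g_{ji}A'_jg'$, which is precisely the block-column form of $g'A'=A'(g'\otimes g')$. By hypothesis (a) this forces $g'=I_{n-1}$.

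Finally I would show $\mathbf{g}=0$. With $g'=I_{n-1}$ the relation $\sum_{j\ge2}g_{ji}A_j=A_i$ holds, so for $i\ge 2$ the equation reduces to $gA_i=A_ig$; reading off the bottom-left blocks yields $a^1_{i1}\mathbf{g}+\overline{a}_{i1}=\overline{a}_{i1}+A'_i\mathbf{g}$, that is $(A'_i+\operatorname{tr}(A'_i)I_{n-1})\mathbf{g}=0$ for every $i\ge2$, whence $A'_{bl}\mathbf{g}=0$ and $\mathbf{g}=0$ by hypothesis (b). Thus $g=I$. The only place requiring genuine care is the first step: one must conjugate rather than commute, so it is essential here that $g$ be invertible in order to pass from $gA_i=\sum_jg_{ji}A_jg$ to the trace identity $\operatorname{tr}(A_i)=\sum_jg_{ji}\operatorname{tr}(A_j)$; after that the argument runs as the formal multiplicative shadow of the derivation computation, with the $i=1$ block column never being needed (consistent with $A_1$ being arbitrary).
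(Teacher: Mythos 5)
Your proof is correct and follows essentially the same route as the paper's: the trace identity $tr(A_i)=\sum_j g_{ji}tr(A_j)=g_{1i}tr(A_1)$ to fix the first row of $g$, then hypothesis (a) applied to the bottom-right blocks to get $g'=I_{n-1}$, then hypothesis (b) applied to $(A'_i+tr(A'_i)I_{n-1})\mathbf{g}=0$ to kill the first column. Your remark that $\det g=\det g'$ guarantees $g'\in GL(n-1,\mathbb{F})$ fills in a small detail the paper leaves implicit, but otherwise the two arguments coincide.
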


\begin{proof}Let $(g_{ij})=g\in GL(n,\mathbb{F})$ and $gA=A(g\otimes g)$. The last equality is equivalent to  $A_i=g^{-1}\sum_{j=1}^nA_jg_{ji}g$, where $i=1,2,...,n$. Therefore $tr(A_i)=\sum_{j=1}^ntr(g^{-1}A_jg)g_{ji}=tr(A_1)g_{1i}$, that is, $g_{11}=1$ and at 
	$i>1$ one has $g_{1i}=0$, so 
	$g=\begin{pmatrix}1&0\\ \mathbf{g}_0&g'\end{pmatrix}$. In $i>1$ case the equality $gA_i=\sum_{j=1}^nA_jg_{ji}g$, that is 
	$$\begin{pmatrix}1&0\\ \mathbf{g}_0&g'\end{pmatrix}\begin{pmatrix}a^1_{i1}&0\\ \overline{a}_{i1}&A'_i\end{pmatrix}=\sum_{j=1}^n\begin{pmatrix}a^1_{j1}&0\\ \overline{a}_{j1}&A'_j\end{pmatrix}\begin{pmatrix}1&0\\ \mathbf{g}_0&g'\end{pmatrix}g_{ji},$$ can be represented in the following form $\begin{pmatrix}a^1_{i1}&0\\ a^1_{i1}\mathbf{g}_0+g'\overline{a}_{i1}&g'A'_i\end{pmatrix}=$ $$\sum_{j=1}^n\begin{pmatrix}a^1_{j1}&0\\ \overline{a}_{j1}+A'_j\mathbf{g}_0&A'_jg'\end{pmatrix}g_{ji}=\begin{pmatrix}\sum_{j=2}^na^1_{j1}g'_{ji}&0\\ \sum_{j=2}^n(\overline{a}_{j1}+A'_j\mathbf{g}_0)g'_{ji}&\sum_{j=2}^n(A'_jg')g'_{ji}\end{pmatrix}.$$ In particular, it implies that if $i>1$ then $g'A'_i=\sum_{j=2}^nA'_jg'g'_{ji}$, so $g'=I_{n-1}$. Therefore the above equality is nothing but  
	$\begin{pmatrix}a^1_{i1}&0\\ a^1_{i1}\mathbf{g}_0+\overline{a}_{i1}&A'_i\end{pmatrix}=\begin{pmatrix}a^1_{i1}&0\\ \overline{a}_{i1}+A'_i\mathbf{g}_0&A'_i\end{pmatrix}$ and $(A'_i-a^1_{i1}I_{n-1})\mathbf{g}_0=(A'_i+tr(A'_i)I_{n-1})\mathbf{g}_0=0$ for all $i>1$, that is $A'_{bl}\mathbf{g}_0=0$. So  $\mathbf{g}_0=0$ and  $g=I$ due to $rk(A'_{bl})=n-1$. 
\end{proof}

Let $\mathcal{A}_{2,constr.aut}$ stand  for the set of all two-dimensional algebras given by MSC   $$A=\left(
\begin{array}{cccc}
	\alpha_1 & \alpha_2 &1+\alpha_2 & \alpha_4 \\
	\beta_1 & -\alpha_1 & 1-\alpha_1 & -\alpha_2
\end{array}\right),\ \mbox{where}\ \mathbf{c}=(\alpha_1, \alpha_2, \alpha_4, \beta_1) \in \mathbb{F}^4.$$  As in the derivation case by the use  of  $\mathcal{A}_{2,constr.aut}$ one can construct, following the method presented in Theorem 4.1, a set of three-dimensional algebras $\mathcal{A}_{3,constr.aut}$  which have only trivial automorphism and etcetera, one can construct $\mathcal{A}_{n,constr.aut}$ elements of which have only trivial automorphism.

It is clear that according to the Theorems 3.2 and Theorems 4.1  the equality $$\mathcal{A}_{n,aut,constr}=\mathcal{A}_{n,der,constr}$$ is valid.

Now we are ready to prove the following result.	
Let $\mathcal{A}_{n, triv.aut}=\{\mathbb{A}\in\mathcal{A} :\ Aut(\mathbb{A})=\{I\}\}$.	
\begin{theorem} The following statements are true:\\ 
	1. $\mathcal{A}_{n, triv.aut}\neq \emptyset$.\\
	2. If $\mathbb{F}$ is algebraically closed field then $\mathcal{A}_{n, triv.der}\cap\mathcal{A}_{n, triv.aut}$ is a dense, in the Zariski topology, subset of $\mathcal{A}$.	\\
	3.  $\mathcal{A}_{n, triv.der}\setminus \mathcal{A}_{n, triv.aut}\neq \emptyset$.\end{theorem}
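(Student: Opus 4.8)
The plan is to prove the three statements separately, reducing each to results already established in Sections 3 and 4. Statements 1 and 3 hold over an arbitrary field, while statement 2 is the density assertion for algebraically closed $\mathbb{F}$.

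For statement 1, the idea is to exhibit an explicit element of $\mathcal{A}_{n,constr.aut}$, which by Theorem 4.1 has only trivial automorphism and hence lies in $\mathcal{A}_{n, triv.aut}$. I would start from a two-dimensional $A'\in\mathcal{A}_{2,constr.aut}$ whose matrix $A'_{bl}$ has rank $2$ (such $A'$ are available, and they have trivial automorphism by \cite{eshmirzaev24}), and then apply the construction of Theorem 4.1 repeatedly, raising the dimension by one at each step. The point needing attention is that hypothesis (b) must survive every step, or the induction stalls; this is the one genuinely non-formal part of statement 1. I would settle it with the following observation: in the block $A_i=\begin{pmatrix}a^1_{i1}&0\\ \overline{a}_{i1}&A'_i\end{pmatrix}$ with $a^1_{i1}=-tr(A'_i)$ one has $tr(A_i)=0$ for every $i\geq 2$, so when this algebra is used as the new $A'$ all blocks of index $\geq 2$ enter $A_{bl}$ unshifted, while the free block contributes $A_1+tr(A_1)I_n$. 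Choosing $A_1$ with $tr(A_1)\neq 0$ and $A_1+tr(A_1)I_n$ invertible (possible over any field) already forces $rk(A_{bl})=n$, so the construction can always be continued. This produces a nonempty $\mathcal{A}_{n,constr.aut}\subseteq\mathcal{A}_{n, triv.aut}$.

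For statement 2, I would combine Proposition 3.3 with the equality $\mathcal{A}_{n,constr.der}=\mathcal{A}_{n,constr.aut}$. Every $A\in\mathcal{A}_{n,constr.der}$ has trivial derivation by Theorem 3.2 and, being at the same time an element of $\mathcal{A}_{n,constr.aut}$, trivial automorphism by Theorem 4.1. Both $Der$ and $Aut$ transform by conjugation under a change of basis, namely $Der(g^{-1}Ag^{\otimes 2})=g^{-1}Der(A)g$ and $Aut(g^{-1}Ag^{\otimes 2})=g^{-1}Aut(A)g$, so triviality of either group is preserved by the isomorphism action $A\mapsto g^{-1}Ag^{\otimes 2}$. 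Hence the whole set $\{g^{-1}Ag^{\otimes 2}:A\in\mathcal{A}_{n,constr.der},\ g\in GL(n,\mathbb{F})\}$ is contained in $\mathcal{A}_{n, triv.der}\cap\mathcal{A}_{n, triv.aut}$. By Proposition 3.3 this set is dense in $\mathcal{A}$, and any set containing a dense subset is itself dense, which yields statement 2.

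For statement 3, the natural candidate is the algebra used in the proof of Theorem 3.1, namely $e_i^2=e_i$ and $e_ie_j=0$ for $i\neq j$, which already belongs to $\mathcal{A}_{n, triv.der}$. On the other hand, for any permutation $\sigma$ of $\{1,\dots,n\}$ the linear map $g_\sigma$ with $g_\sigma(e_i)=e_{\sigma(i)}$ is invertible and satisfies $g_\sigma(e_ie_j)=g_\sigma(e_i)g_\sigma(e_j)$, so $g_\sigma\in Aut(\mathbb{A})$; a transposition gives a nontrivial automorphism as soon as $n\geq 2$. Thus this algebra lies in $\mathcal{A}_{n, triv.der}\setminus\mathcal{A}_{n, triv.aut}$, which is therefore nonempty. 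Among the three parts the only real obstacle is the propagation of the rank condition in statement 1; statements 2 and 3 are short once the earlier results are invoked.
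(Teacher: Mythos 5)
Your proof is correct and follows essentially the same route as the paper: statement 1 via the inductive construction of Theorem 4.1, statement 2 via the inclusion $\mathcal{A}_{n,constr.aut}=\mathcal{A}_{n,constr.der}\subset\mathcal{A}_{n,triv.der}\cap\mathcal{A}_{n,triv.aut}$ combined with Proposition 3.3, and statement 3 via the idempotent-basis algebra of Theorem 3.1 together with the transposition automorphism. You additionally verify two points the paper leaves implicit --- that hypothesis (b) of Theorem 4.1 survives each inductive step (since $tr(A_i)=0$ for $i\geq 2$, choosing $A_1$ with $A_1+tr(A_1)I_n$ invertible suffices) and that triviality of $Der$ and $Aut$ is preserved under the isomorphism action $A\mapsto g^{-1}Ag^{\otimes 2}$ --- and both verifications are correct.
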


\begin{proof}  	
	The validity of the first statement is clear due to the constructive approach in Theorem 4.1.
	
	The second statement is true due to the inclusion $$\mathcal{A}_{n,constr.aut}=\mathcal{A}_{n,constr.der}\subset \mathcal{A}_{n, triv.der}\cap\mathcal{A}_{n, triv.aut}$$ and Proposition 3.3 . 
	
	To prove the last statement $\mathcal{A}_{n, triv.der}\setminus \mathcal{A}_{n, triv.autr}\neq \emptyset$ one can consider $n$-dimensional (commutative, associative) algebra, used in Theorem 3.1, defined by  \[e_i^2=e_i\ \mbox{whenever}\  i=1,2,...,n,\ e_ie_j=0\ \mbox{whenever}\  i\neq j,\] and automorphism $g$ defined by, for example, $g(e_1)=e_2, g(e_2)=e_1$ and $g(e_j)=e_j$ in other cases.
\end{proof}

\section{Simple algebras}
In this section we deal with simple algebras. An algebra is said to be a simple algebra if its two-sided ideals are trivial. 

Let $B=(B_1,B_2, ...,B_m)$, where $B_i\in Mat(n\times n,\mathbb{F})$, $i=1,2,...,m$, be a fixed system of matrices. A vector subspace $V$ of $\mathbb{F}^n$ is said to be $B$-invariant if  $B_i(V)\subset V$ for all $i=1,2,...,m$. It is clear that if $\lambda_1,\lambda_2,...,\lambda_m$ are any scalars then $V$ is $B$-invariant if and only if $V$ is $(B_1-\lambda_1I_n,B_2-\lambda_2I_n, ...,B_m-\lambda_mI_n)$- invariant. 

An algebra given by its MSC $A=(A_1\vert A_2\vert ...\vert A_n)$ is a simple algebra if and only if the system of matrices $\overline{A}=(A_1,A_2,...,A_n,A^o_1,A^o_2,...,A^o_n)$ has only trivial invariant subspace, where 
$ee_i=eA^o_i$, that is $A^o_i=\left(\begin{array}{cccc}a_{1i}^1&a_{2i}^1&...&a_{ni}^1\\
	a_{1i}^2&a_{2i}^2&...&a_{ni}^2\\
	...&...&...&...\\ a_{1i}^n&a_{2i}^n&...&a_{ni}^n\end{array}\right)$.

\begin{theorem} Let $A'=(A'_2\vert A'_3\vert...\vert A'_n)\in Mat((n-1)\times (n-1)^2,\mathbb{F})$ be a simple algebra such that
	the rank of the block-column matrix $A'_{bl}$ with blocks $A'_i+tr(A'_i)I_{n-1}$, where $i=2,3,...,n$, is $n-1$. 
	
	In this case an algebra $A=(A_1\vert A_2\vert ...\vert A_n)\in Mat(n\times n^2,\mathbb{F})$ is also a simple algebra, where $A_1=(a^i_{1j})\in Mat(n\times n,\mathbb{F})$, the row $(a^1_{12},a^1_{13},...,a^1_{1n})$ is not zero, 
	$A_i=\begin{pmatrix}a^1_{i1}&0\\ \overline{a}_{i1}&A'_i\end{pmatrix}$, $a^1_{i1}=-tr(A'_i)$ and $\overline{a}_{i1}\in \mathbb{F}^{n-1}$ are any column vectors such that the rank of the block-column matrix $A''_{bl}$ with blocks $(\overline{a}_{i1}\vert A'_i+tr(A'_i)I_{n-1})$, $i\in \{2,3,...,n\}$, is $n$. \end{theorem}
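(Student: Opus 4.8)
The plan is to use the invariant-subspace criterion stated just above the theorem: $A$ is simple if and only if the system $\overline{A}=(A_1,\dots,A_n,A^o_1,\dots,A^o_n)$ has only the trivial invariant subspaces $\{0\}$ and $\mathbb{F}^n$. Accordingly I would fix an arbitrary $\overline{A}$-invariant subspace $V\subseteq\mathbb{F}^n$ and argue that $V$ must be one of these two. Throughout I work with the splitting $\mathbb{F}^n=\mathbb{F}e_1\oplus W$, where $W=\operatorname{span}(e_2,\dots,e_n)$ is identified with the vectors whose first coordinate vanishes.

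The structural heart of the argument is to read off the block form of the generators. By construction each $A_i$ with $i\ge 2$ has a zero upper-right block, i.e. $a^1_{ij}=0$ for all $i,j\ge 2$; since the $(1,j)$-entry of $A^o_i$ is precisely $a^1_{ji}$, the very same vanishing shows that each $A^o_i$ with $i\ge 2$ is again block lower-triangular, with lower-right block equal to the right-multiplication matrix $(A'_i)^o$ of the subalgebra. Hence every generator of index $\ge 2$ preserves $W$, and on $W\cong\mathbb{F}^{n-1}$ it acts exactly as $A'_i$ or $(A'_i)^o$. It follows that $V\cap W$, viewed inside $\mathbb{F}^{n-1}$, is invariant under the full system $\overline{A'}=(A'_2,\dots,A'_n,(A'_2)^o,\dots,(A'_n)^o)$, so by the simplicity of $A'$ one gets the dichotomy $V\cap W=\{0\}$ or $V\cap W=W$.

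It then remains to close the two cases. If $V\cap W=W$ then $W\subseteq V$, and since $W$ has codimension one the only options are $V=W$ or $V=\mathbb{F}^n$; but $V=W$ is impossible, because the hypothesis that the row $(a^1_{12},\dots,a^1_{1n})$ is nonzero means the upper-right block of $A_1$ is nonzero, so $A_1(W)\not\subseteq W$. Thus $V=\mathbb{F}^n$. If instead $V\cap W=\{0\}$, then $\dim V\le 1$; supposing $\dim V=1$, I would take a generator $v=(1,w)^{\top}$ (the first coordinate is nonzero since $v\notin W$) and use that it is a common eigenvector. For $i\ge 2$ the relation $A_iv=\lambda_iv$ forces $\lambda_i=a^1_{i1}=-\operatorname{tr}(A'_i)$ from the top entry and $\overline{a}_{i1}+(A'_i+\operatorname{tr}(A'_i)I_{n-1})w=0$ from the lower block; stacking these over $i$ is exactly $A''_{bl}(1,w)^{\top}=0$, which contradicts $\operatorname{rank}(A''_{bl})=n$. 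Hence $V=\{0\}$, completing the dichotomy.

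I expect the main obstacle to lie in the second paragraph, namely correctly identifying the right-multiplication matrices $A^o_i$ from the matrix of structure constants and verifying that their upper-right blocks vanish for $i\ge 2$; this is what guarantees that $W$ is stable under every generator of index $\ge 2$ and that the restriction is the whole system $\overline{A'}$, and it is the only step that is not a direct linear-algebra manipulation. Once that is in place, the first case uses only the nonvanishing of the first row of $A_1$, and the second converts the rank-$n$ condition on $A''_{bl}$ directly into the nonexistence of a common eigenvector lying off $W$.
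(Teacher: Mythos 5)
Your proposal is correct and takes essentially the same route as the paper's own proof: the same invariant-subspace criterion, the same observation that $A_i$ and $A^o_i$ are block lower-triangular for $i\ge 2$ (so that vectors with vanishing first coordinate reduce to the system $\overline{A'}$), the same simplicity dichotomy, the nonzero row $(a^1_{12},\dots,a^1_{1n})$ of $A_1$ to exclude $V=W$, and the rank of $A''_{bl}$ to exclude anything else. The only differences are organizational — the paper splits on whether every element of $V$ has zero first entry and, in the second case, generates $\overline{A'}$-invariant subspaces from the vectors $\overline{a}_{i1}+(A'_i-a^1_{i1}I_{n-1})v'$, while you split on $V\cap W$ and phrase the second case as a common-eigenvector computation — but both produce the identical linear system whose solvability would contradict $\mathrm{rank}(A''_{bl})=n$.
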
	\begin{proof} Note that in this case $A^o_i=\begin{pmatrix}a^1_{1i}&0\\ \overline{a}_{1i}&(A')^o_i\end{pmatrix}$ whenever $i=2,3,...,n$.  Let a subspace $V$ of $\mathbb{F}^{n}$ be $\overline{A}$-invariant. Consider the projection map $p: V\rightarrow \mathbb{F}^{n-1}$, where $p(v)=p(v_1,v_2,...,v_n)=v'=(v_2,...,v_n)$ by definition.  
	
	If the first entry $v_1$ of $v\in  V$ is zero for all elements of $V$ then $V'=p(V)$ is a $\overline{A'}$-invariant subspace of $\mathbb{F}^{n-1}$. Indeed, if $i>1$ then $A_iv=\begin{pmatrix}0\\ A'_iv'\end{pmatrix}$, that is $A'_iv'\in V'$. Similarly, one has $(A')^o_iv'\in V'$. In this case $V'=\{0\}$ or $V'=\mathbb{F}^{n-1}$ due to the simplicity of $A'$. If $V'=\{0\}$ then $V=\{0\}$, if $V'=\mathbb{F}^{n-1}$ then there exists $v'\in \mathbb{F}^{n-1}$ such that the first entry of $A_1\begin{pmatrix}0\\ v'\end{pmatrix}$-is not zero as far as $(a^1_{12},a^1_{13},...,a^1_{1n})\neq 0$. This contradiction shows that $V=\{0\}$ is trivial subspace.
	
	If $i,j>1$ and there exists $v=\begin{pmatrix}1\\ v'\end{pmatrix}\in V$ then
	$A_iv=\begin{pmatrix}a^1_{i1}\\ \overline{a}_{i1}+A'_iv'\end{pmatrix}\in V$,\\ 
	$\begin{pmatrix}0\\ \overline{a}_{i1}+A'_iv'-a^1_{i1}v'\end{pmatrix}\in V$, 
	$A_j\begin{pmatrix}0\\ \overline{a}_{i1}+A'_iv'-a^1_{i1}v'\end{pmatrix}=\begin{pmatrix}0\\ A'_j(\overline{a}_{i1}+A'_iv'-a^1_{i1}v')\end{pmatrix}\in V$ and\\ $A^o_j\begin{pmatrix}0\\ \overline{a}_{i1}+A'_iv'-a^1_{i1}v'\end{pmatrix}=\begin{pmatrix}0\\ (A')^o_j(\overline{a}_{i1}+A'_iv'-a^1_{i1}v')\end{pmatrix}\in V.$
	
	It means that for every fixed $i>1$ one has  $\overline{A'}$-invariant subspace $V_i''$ generated by $\overline{a}_{i1}+(A'_i-a^1_{i1}I_{n-1})v'$, so $V_i''$ should be trivial. If $V_i''=\mathbb{F}^{n-1}$ for some $\begin{pmatrix}1\\ v'\end{pmatrix}\in V$ and $i>1$ then evidently $V=\mathbb{F}^{n}$-a trivial subspace. Let $V_i''=0$ be true for all $\begin{pmatrix}1\\ v'\end{pmatrix}\in V$ and $i>1$, that is $(A'_i-a^1_{i1}I_{n-1})v'=-\overline{a}_{i1}$.
	But $rk(A'_{bl})=n-1$ and $rk(A''_{bl})=n$, therefore
	there is no $v'$ for which $(A'_i-a^1_{i1}I_{n-1})v'=-\overline{a}_{i1}$ for all $i=2,3,...,n$. \end{proof}

\section{Some applications}
There is a result in
\cite{popov2016} (Theorem 4) which says that in the case of algebraically closed $\mathbb{F}$, $Char(\mathbb{F})\neq 2$ 
there is a nonempty open subset of $\mathcal{A}$  consisting of simple algebras which have only trivial automorphism. Theorem 3.1 shows that one can strengthen 
this result by changing "consisting of simple algebras with only trivial automorphism" to "consisting of simple algebras which have only trivial automorphism and trivial derivation". 

Let $\mathcal{A}^*_{2}$ stand for the set of all two-dimensional simple algebras which have only trivial derivation and trivial automorphism, for example, algebras given by MSC $$A=\left(
\begin{array}{cccc}
	\alpha_1 & \alpha_2 &1+\alpha_2 & \alpha_4 \\
	\beta_1 & -\alpha_1 & 1-\alpha_1 & -\alpha_2
\end{array}\right),\ \mbox{where}\ \mathbf{c}=(\alpha_1, \alpha_2, \alpha_4, \beta_1) \in \mathbb{F}^4,$$  $\beta_1\neq 2\alpha_1+\alpha_2$ or $\alpha_4\neq -\alpha_1-2\alpha_2,$ have these properties. If one applies construction of Theorem 5.1 to any element of this set to get a three-dimensional simple algebra then the obtained algebra also will have only trivial derivation and trivial automorphism due to the Theorem 3.2 and Theorem 4.1. Let $\mathcal{A}^*_{3}$ stand for the set of all such constructed algebras and etcetera, one can construct $\mathcal{A}^*_{n}$ for any $n>1$. The following result is valid.   \begin{proposition}  In the case of algebraically closed $\mathbb{F}$ the set $$\{g^{-1}Ag^{\otimes 2}: A\in\mathcal{A}^*_{n}, g\in GL(n,\mathbb{F})\}$$ is a dense subset of $\mathcal{A}$.
\end{proposition}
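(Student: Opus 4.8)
The plan is to mimic the proof of Proposition 3.3. To prove that
$$S=\{g^{-1}Ag^{\otimes 2}:\ A\in\mathcal{A}^*_{n},\ g\in GL(n,\mathbb{F})\}$$
is Zariski dense in $\mathcal{A}=Mat(n\times n^2,\mathbb{F})$, it is enough to show that every polynomial $f[X_1\vert X_2\vert\dots\vert X_n]$ vanishing identically on $S$ is the zero polynomial. So I would fix such an $f$ and try to prove that it cannot involve any of the block variables $X_1,\dots,X_n$.

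First I would isolate the structural feature of the construction in Theorem 5.1 (together with Theorems 3.2 and 4.1) that makes the argument run. The block $A_1$ is entirely free apart from the two open conditions $tr(A_1)\neq 0$ (needed for triviality of derivations and automorphisms) and $(a^1_{12},a^1_{13},\dots,a^1_{1n})\neq 0$ (needed for simplicity), and this choice is made independently of the remaining blocks. Since a finite intersection of nonempty Zariski-open subsets of $Mat(n\times n,\mathbb{F})$ is again nonempty and open, hence dense, taking $g=I$ and freezing any admissible tail $(A_2,\dots,A_n)$ shows that the one-variable polynomial $X_1\mapsto f[X_1\vert A_2\vert\dots\vert A_n]$ vanishes on a dense set of matrices and is therefore identically zero. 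Exactly as in Proposition 3.3, this lets me conclude that $f$ does not depend on $X_1$.

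Next I would use that $S$ is $GL(n,\mathbb{F})$-invariant by construction. For each $k$ the basis transposition $e_1\leftrightarrow e_k$ belongs to $GL(n,\mathbb{F})$ and sends $\mathcal{A}^*_{n}$ into $S$, carrying the old free block $A_1$ (up to an internal conjugation that does not affect freeness) onto the $k$-th block of the image; thus in this transposed family the $k$-th block is free over a dense open set. Rerunning the previous paragraph for this family yields that $f$ is independent of $X_k$ as well. Letting $k$ run through $2,\dots,n$, I would conclude that $f$ depends on none of the block variables, so $f$ is a constant; as it vanishes on the nonempty set $S$, it must be $0$, and hence $\overline{S}=\mathcal{A}$.

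The step I expect to be the genuine obstacle is the inference, used twice above and already in Proposition 3.3, from ``$f$ vanishes whenever the relevant block is free'' to ``$f$ is independent of that block variable''. The difficulty is that the admissible tails produced by Theorem 5.1 are not Zariski dense: the construction forces $a^1_{ij}=0$ for $i,j\ge 2$ and ties $a^1_{i1}$ to $-tr(A'_i)$, so the coefficients of $f$ with respect to $X_1$ are only known to vanish on this thin locus. To close this gap I would argue by induction on $n$. The base case $n=2$ is clear, since $\mathcal{A}^*_{2}$ is dense in $\mathcal{A}_2$ (it contains a nonempty Zariski-open subset, as recalled just before the proposition and in Popov's theorem). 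For the inductive step I would note that conjugating elements of $\mathcal{A}^*_{n}$ by the block-diagonal matrices $g=\mathrm{diag}(1,g')$, $g'\in GL(n-1,\mathbb{F})$, keeps us inside $S$ while acting on the lower-right blocks by $A'\mapsto (g')^{-1}A'(g')^{\otimes 2}$; by the inductive hypothesis these lower-right blocks then range over a dense subset of $\mathcal{A}_{n-1}$. Combined with the freedom in $A_1$ and in the columns $\overline{a}_{i1}$, and with the block transpositions above, this makes the relevant coefficient polynomials of $f$ vanish on a dense subset of $\mathcal{A}$, so they vanish identically, which is exactly what legitimises the informal passages of the present proof and of Proposition 3.3.
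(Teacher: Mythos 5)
The paper never actually proves this proposition: it is stated immediately after the construction of $\mathcal{A}^*_n$, with only the remark that it remains true in characteristic $2$, the intended argument plainly being that of Proposition 3.3. So your attempt can only be judged against that template, and your reading of it is accurate: your opening paragraphs reproduce it, and the obstacle you then single out --- that vanishing of $f$ for free $X_1$ over the \emph{thin} set of admissible tails only makes the $X_1$-coefficient polynomials of $f$ vanish on that thin set, not identically --- is exactly the illegitimate step in the paper's own proof of Proposition 3.3. Identifying this is the most valuable part of your proposal.

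Your inductive repair, however, does not close the gap, and no argument can, because the statement is false for $n\ge 3$. The fatal sentence is your last one: the moves you allow (conjugation by $\mathrm{diag}(1,g')$, free choices of $A_1$ and of the columns $\overline{a}_{i1}$, and finitely many block transpositions) never produce tails outside the constraint locus of Theorem 5.1 --- zero top-right blocks and corner entries tied to traces --- up to a finite set of index permutations, and a finite union of proper closed sets is still proper, so the coefficient polynomials of $f$ are never forced to vanish on a dense set. Moreover the obstruction is intrinsic and survives conjugation by all of $GL(n,\mathbb{F})$: the zero top-right blocks of $A_2,\dots,A_n$ say precisely that $\mathrm{span}(e_2,\dots,e_n)$ is a subalgebra, so every element of $S$ admits an $(n-1)$-dimensional subalgebra. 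The algebras admitting such a subalgebra form a set of dimension at most $n^3-(n-1)^2+(n-1)=n^3-(n-1)(n-2)$: for a fixed hyperplane the subalgebra condition is $(n-1)^2$ independent linear equations, the hyperplanes form an $(n-1)$-dimensional family, and one projects the incidence variety in $\mathcal{A}\times Gr(n-1,n)$ down to $\mathcal{A}$. For $n\ge 3$ this dimension is strictly less than $n^3=\dim\mathcal{A}$, hence $\overline{S}\ne\mathcal{A}$. Only your base case $n=2$ is correct (there $S=\mathcal{A}^*_2$ itself, which is isomorphism-closed and dense). The same subalgebra obstruction refutes Proposition 3.3 and its proof as well; the present proposition could only be saved by taking $\mathcal{A}^*_n$ to be the set of \emph{all} $n$-dimensional simple algebras with trivial derivations and automorphisms, not the inductively constructed family.
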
 
Note that this result is true in $Char(\mathbb{F})= 2$  case as well.

Here is an application of the main results to the theory of $k$-local derivations(automorphisms) of finite dimensional algebras, see \cite{ayupov2016,jonson2001}. 	
Let $k\geq 1$ be any natural number.
\begin{definition} A map(not necessarily, linear) $B:\mathbb{A}\rightarrow \mathbb{A}$ is said to be a  $k$-local automorphism($k$-local derivation) if for any $x_1,x_2,...,x_k\in \mathbb{A}$ there exists $A_{(x_1,x_2,...,x_k)}\in Aut(\mathbb{A})$(respectively,  $A_{(x_1,x_2,...,x_k)}\in Der(\mathbb{A})$) such that $B(x_i)=A_{(x_1,x_2,...,x_k)}(x_i)$ for all  $k=1,2,...,k$.\end{definition}

Further $k-Locder(\mathbb{A})$ $(k-LocAut(\mathbb{A}))$ stands for the set of all k-local derivations(respectively, k-local automorphisms) of $\mathbb{A}$. The description of $k-Locder(\mathbb{A})$ $(k-LocAut(\mathbb{A}))$, for a given algebra  $\mathbb{A}$, is considered by many researchers. The following corollary states for the "majority" of algebras $\mathbb{A}$ these sets coincide with $Der(\mathbb{A})$(respectively, $Aut(\mathbb{A})$),  

\begin{corollary} If $\mathbb{F}$ is algebraically closed, $k\geq 1$ is any natural number then the set $$\{\mathbb{A}\in\mathcal{A}:\  k-LocDer(\mathbb{A})=Der(\mathbb{A}), k-LocAut(\mathbb{A})=Aut(\mathbb{A}) \}$$ is a dense subset of the variety of $n$-dimensional algebras.\end{corollary}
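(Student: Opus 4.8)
The plan is to exhibit the set in question as a superset of the already-dense set $\mathcal{A}_{n,triv.der}\cap\mathcal{A}_{n,triv.aut}$ and then to use that any set containing a Zariski-dense set is itself dense. Thus the real content is the inclusion
$$\mathcal{A}_{n,triv.der}\cap\mathcal{A}_{n,triv.aut}\subseteq\{\mathbb{A}\in\mathcal{A}:\ k\text{-}LocDer(\mathbb{A})=Der(\mathbb{A}),\ k\text{-}LocAut(\mathbb{A})=Aut(\mathbb{A})\},$$
which I would establish from two parallel observations about Definition 6.2.

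First I would handle the derivation half. Fix $\mathbb{A}\in\mathcal{A}_{n,triv.der}$, so that $Der(\mathbb{A})=\{0\}$, and let $B$ be any $k$-local derivation. For an arbitrary $x\in\mathbb{A}$, apply the definition to any $k$-tuple whose first entry is $x$: there is $D\in Der(\mathbb{A})$ with $B(x)=D(x)$. Since $Der(\mathbb{A})=\{0\}$ forces $D=0$, we get $B(x)=0$, and as $x$ was arbitrary, $B=0$. Hence $k\text{-}LocDer(\mathbb{A})=\{0\}=Der(\mathbb{A})$.

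The automorphism half is strictly parallel, with the identity $I$ replacing $0$. If $\mathbb{A}\in\mathcal{A}_{n,triv.aut}$, so $Aut(\mathbb{A})=\{I\}$, and $B$ is a $k$-local automorphism, then for each $x$ there is $g\in Aut(\mathbb{A})=\{I\}$ with $B(x)=g(x)=x$, whence $B=I$ and $k\text{-}LocAut(\mathbb{A})=\{I\}=Aut(\mathbb{A})$. Together these two observations yield the displayed inclusion.

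Finally, since $\mathbb{F}$ is algebraically closed, the second statement of Theorem 4.2 gives that $\mathcal{A}_{n,triv.der}\cap\mathcal{A}_{n,triv.aut}$ is dense in $\mathcal{A}$, so the larger set is dense as well. There is essentially no analytic obstacle here: the entire argument reduces to the set inclusion, and the only point that deserves a moment's care is the use of Definition 6.2 to read off the value of $B$ at a single arbitrary point—by placing that point among the $x_i$—so that the triviality of $Der$ (respectively $Aut$) propagates to the triviality of its $k$-local counterpart.
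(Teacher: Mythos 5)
Your proof is correct and is precisely the argument the paper intends (the corollary is stated there without an explicit proof, as an immediate consequence of the earlier results): triviality of $Der(\mathbb{A})$ and $Aut(\mathbb{A})$ forces every $k$-local derivation to be $0$ and every $k$-local automorphism to be $I$, giving the inclusion of the dense set $\mathcal{A}_{n,triv.der}\cap\mathcal{A}_{n,triv.aut}$ from Theorem 4.2 into the set in question. Nothing is missing; your handling of the definition by embedding an arbitrary point into a $k$-tuple is exactly the right reading.
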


\section{Appendix section: Two-dimensional algebras}

In the case of two-dimensional algebras, given over any field, a complete classification, up to isomorphism, of all algebras which have only trivial derivation(automorphism) and all simple algebras can be provided. For the sake of completeness we would like to represent here the corresponding results without proofs as far as the proofs are not difficult.

In \cite{eshmirzaev24} one can find description of all derivations and automorphisms of representatives of isomorphic classes of two-dimensional algebras. One can use it to get the following results, in which we follow the notations of \cite{bekbaev24,eshmirzaev24}.

\begin{theorem} In $Char.(\mathbb{F})\neq 2,3$ case any two-dimensional algebra, which has only trivial derivation, is isomorphic to only one of the following such algebras given by MSC representatives	\begin{itemize}
		\item	$ A_{1}(\mathbf{c})=\left(
		\begin{array}{cccc}
			\alpha_1 & \alpha_2 &1+\alpha_2 & \alpha_4 \\
			\beta_1 & -\alpha_1 & 1-\alpha_1 & -\alpha_2
		\end{array}\right), \mbox{where}\ \mathbf{c}=(\alpha_1, \alpha_2, \alpha_4, \beta_1) \in \mathbb{F}^4,$ 
		\item	$ A_{2}(\mathbf{c})=\left(
		\begin{array}{cccc}
			\alpha_1 & 0 & 0 & \alpha_4 \\
			1& \beta_2 & 1-\alpha_1&0
		\end{array}\right), \mbox{where}\ \mathbf{c}=(\alpha_1,\alpha_4, \beta_2)\in \mathbb{F}^3, \alpha_4\neq 0,$		\item	$ A_{3}(\mathbf{c})=\left(
		\begin{array}{cccc}
			\alpha_1 & 0 & 0 & \alpha_4 \\
			0& \beta_2 & 1-\alpha_1&0
		\end{array}\right),$ where $\mathbf{c}=(\alpha_1,\alpha_4, \beta_2)\in \mathbb{F}^3$, $\alpha_4\neq 0$, 
		\item	$ A_{4}(\mathbf{c})=\left(
		\begin{array}{cccc}
			0 & 1 & 1 & 0 \\
			\beta _1& \beta_2 & 1&-1
		\end{array}\right),\ \mbox{where}\ \mathbf{c}=(\beta_1, \beta_2)\in \mathbb{F}^2,$ 
		\item	$ A_{6}(\mathbf{c})=\left(
		\begin{array}{cccc}
			\alpha_1 & 0 & 0 & \alpha_4 \\
			1& 1-\alpha_1 & -\alpha_1&0
		\end{array}\right),\ \mbox{where}\ \mathbf{c}=(\alpha_1,\alpha_4)\in \mathbb{F}^2,\ \alpha_4\neq 0,$
		\item	$ A_{7}(\mathbf{c})=\left(
		\begin{array}{cccc}
			\alpha_1 & 0 & 0 & \alpha_4 \\
			0&1-\alpha_1 & -\alpha_1&0
		\end{array}\right),$ where $\mathbf{c}=(\alpha_1,\alpha_4)\in \mathbb{F}^2$,\ $\alpha_4 \neq 0$,
		\item	$ A_{8}(\mathbf{c})=\left(
		\begin{array}{cccc}
			0 & 1 & 1 & 0 \\
			\beta _1& 1 & 0&-1
		\end{array}\right),\ \mbox{where}\ \mathbf{c}=\beta_1\in\mathbb{F},$\\ 
		\item	$A_{10}(\mathbf{c})=\left(
		\begin{array}{cccc}
			0 &1 & 1 &1 \\
			\beta_1 &0 &0 &-1
		\end{array}
		\right),$ where polynomial\\ $(\beta _1t^3-3t-1)(\beta_1t^2+\beta_1t+1)(\beta_1^2t^3+6\beta_1t^2+3\beta_1t+\beta_1-2)$ has no root in $\mathbb{F}$,
		\item	$A_{11}(\mathbf{c})=\left(
		\begin{array}{cccc}
			0 &0 & 0 &1 \\
			\beta_1 &0 &0 &0
		\end{array}
		\right),$ where polynomial $\beta _1 -t^3$ has no root in $\mathbb{F}$, $\mathbf{c}=\beta_1$,
		\item	$A_{12}(\mathbf{c})=\left(
		\begin{array}{cccc}
			0 & 1 & 1 &0  \\
			\beta_1 &0& 0 &-1
		\end{array}
		\right),\ \mbox{where}\ \mathbf{c}=\beta_1\in \mathbb{F},$ $\beta_1 \neq 0.$\
		
	\end{itemize}
\end{theorem}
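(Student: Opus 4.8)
The plan is to reduce the proof to a cross-reference with the complete isomorphism classification of two-dimensional algebras carried out in \cite{bekbaev24,eshmirzaev24}, where every canonical representative already comes equipped with an explicit description of its derivation algebra as a function of the parameters $\mathbf{c}$. The theorem then amounts to selecting exactly those representatives whose derivation algebra can be reduced to $\{0\}$ and to recording the resulting constraints on $\mathbf{c}$; the existence and uniqueness (``only one of'') clauses are inherited from that classification.

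First I would recall the linearized derivation equation in the case $n=2$. Writing $A=(A_1\vert A_2)$ and $D=(d_{ij})\in Mat(2,\mathbb{F})$, the condition $DA=A(D\otimes I+I\otimes D)$ is equivalent to the two matrix equations $DA_i=A_iD+A_1d_{1i}+A_2d_{2i}$ for $i=1,2$, that is, to a homogeneous linear system $M(A)\,vec(D)=0$ with an $8\times 4$ coefficient matrix $M(A)$. By Theorem 3.1 the algebra has only trivial derivation precisely when $rank(M(A))=4$. For each parametrized family $A_j(\mathbf{c})$ of the full list I would substitute its $A_1,A_2$ into this system and determine the parameter locus on which the rank is full; families that keep a nonzero derivation for every parameter value — which accounts for the gaps in the indexing, such as the absent $A_5$ and $A_9$ — are discarded entirely.

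The hypothesis $Char(\mathbb{F})\neq 2,3$ enters both through the cited classification, whose canonical forms and normalizations require $2$ and $3$ to be invertible, and directly in solving $M(A)\,vec(D)=0$. For example, for $A_{11}(\beta_1)$ one finds that $D=\mathrm{diag}(2d,d)$ must satisfy $3d\beta_1=0$, so in characteristic different from $3$ and for $\beta_1\neq 0$ this forces $D=0$, whereas in characteristic $3$ a one-dimensional derivation algebra survives; analogous factors of $2$ occur in the commutative-type families built from $e_i^2=e_i$. Collecting the surviving families together with their loci assembles the displayed list $A_1(\mathbf{c}),\ldots,A_{12}(\mathbf{c})$.

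The main obstacle is not this per-family rank computation, which is routine, but the bookkeeping that guarantees the sieve is both complete and irredundant. One must verify that every family and every boundary locus of the full classification has been tested for triviality of $Der$, and that restricting to the trivial-derivation locus neither merges previously distinct isomorphism classes nor splits a class. In particular the open conditions such as $\beta_1\neq 2\alpha_1+\alpha_2$ and $\alpha_4\neq 0$, and especially the field-sensitive ``no root in $\mathbb{F}$'' conditions for the one-parameter families $A_{10},A_{11},A_{12}$, must be transcribed faithfully from \cite{bekbaev24,eshmirzaev24}: these root conditions delineate orbits of the residual base-change action $A\mapsto gA(g^{-1})^{\otimes 2}$ (for $A_{11}$ that action rescales $\beta_1$ by a cube), and a root of the displayed polynomial signals a coincidence locus where the chosen $\beta_1$ either falls into another family or acquires extra symmetry. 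Confirming that these transcribed conditions are exactly the ones compatible with triviality of the derivation is the delicate, field-dependent part of the argument.
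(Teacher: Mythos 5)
Your proposal takes essentially the same route as the paper: the paper states this theorem without proof, indicating that it follows by combining the complete isomorphism classification of two-dimensional algebras from \cite{bekbaev24} with the explicit description of the derivations of each canonical representative given in \cite{eshmirzaev24}, which is exactly the cross-referencing sieve you describe. Your additional details (the rank criterion $rank(M(A))=n^2$ from Theorem 3.1 applied family-by-family, and the characteristic-dependent analysis of the parameter loci) are consistent with, and fill in, that intended argument.
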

\begin{theorem} In $Char.(\mathbb{F})=2$ case any two-dimensional algebra, which has only trivial derivation, is isomorphic to only one of the following such algebras given by MSC representatives	
	\begin{itemize}
		\item	$ A_{1,2}(\mathbf{c})=\left(
		\begin{array}{cccc}
			\alpha_1 & \alpha_2 &\alpha_2+1 & \alpha_4 \\
			\beta_1 & \alpha_1 & 1+\alpha_1 & \alpha_2
		\end{array}\right),\ \mbox{where}\ \mathbf{c}=(\alpha_1, \alpha_2, \alpha_4, \beta_1) \in \mathbb{F}^4,$
		\item	$ A_{2,2}(\mathbf{c})=\left(
		\begin{array}{cccc}
			\alpha_1 & 0 & 0 & \alpha_4 \\
			1& \beta_2 & 1+\alpha_1&0
		\end{array}\right),\ \mbox{where}\ \mathbf{c}=(\alpha_1,\alpha_4, \beta_2)\in \mathbb{F}^3,\\ \alpha_4\neq 0,$  or  $\beta_2\neq 1$,
		\item	$ A_{4,2}(\mathbf{c})=\left(
		\begin{array}{cccc}
			\alpha_1 & 1 & 1 & 0 \\
			\beta _1& \beta_2 & 1+\alpha_1&1
		\end{array}\right),\ \mbox{where}\ \mathbf{c}=(\alpha_1,\beta_1, \beta_2)\in \mathbb{F}^3,$ $\beta_2\neq 1,$ 
		\item	$ A_{5,2}(\mathbf{c})=\left(
		\begin{array}{cccc}
			\alpha_1 & 0 & 0 & \alpha_4 \\
			1&1+\alpha_1 & \alpha_1&0
		\end{array}\right),\ \mbox{where}\ \mathbf{c}=(\alpha_1,\alpha_4)\in \mathbb{F}^2,\ \alpha_4\neq 0,\\
		\ A_{5,2}(1,0)=\left(
		\begin{array}{cccc}
			1 & 0 & 0 & 0 \\
			1&0 & 1&0
		\end{array}\right),$
		\item	$ A_{6,2}(\mathbf{c})=\left(
		\begin{array}{cccc}
			\alpha_1 & 0 & 0 & \alpha_4 \\
			0&1+\alpha_1 & \alpha_1&0
		\end{array}\right),$ where $\mathbf{c}=(\alpha_1,\alpha_4)\in \mathbb{F}^2$,  $\alpha_1 \neq 1$ or $\alpha_4 \neq 0$,
		\item	$ A_{7,2}(\mathbf{c})=\left(
		\begin{array}{cccc}
			\alpha_1 & 1 & 1 & 0 \\
			\beta _1& 1+\alpha_1 & \alpha_1&1
		\end{array}\right),\ \mbox{where}\ \mathbf{c}=(\alpha_1,\beta_1)\in\mathbb{F}^2,$ $\alpha_1\neq 1, $
		\item	$A_{8,2}(\mathbf{c})=\left(
		\begin{array}{cccc}
			0 &1 & 1 &1 \\
			\beta_1 &0 &0 &1
		\end{array}
		\right),$ where polynomial\\ $(\beta _1t^3+t+1)(\beta _1t^2+\beta_1t+1)$ has no root in $\mathbb{F}$,
		\item $A_{9,2}(\mathbf{c})=\left(
		\begin{array}{cccc}
			0 &0 & 0 &1\\
			\beta_1 &0 &0 &0
		\end{array}
		\right),\  \mbox{where}\ \mathbf{c}=\beta_1\in\mathbb{F},$ polynomial $\beta_1+t^3$ has no root in $\mathbb{F}$,
		\item	$A_{10,2}(\mathbf{c})=\left(
		\begin{array}{cccc}
			1 & 1 & 1 & 0 \\
			\beta_1 &1& 1 &1
		\end{array}
		\right),\ \mbox{where}\ \mathbf{c}=\beta_1\in \mathbb{F}.$\ 	
	\end{itemize}	
\end{theorem}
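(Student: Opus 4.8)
The plan is to reduce the statement to a finite, mechanical verification built on top of the already-known classification of two-dimensional algebras. First I would invoke the complete list of isomorphism-class representatives of two-dimensional algebras over an arbitrary field of characteristic $2$, as recorded in \cite{bekbaev24,eshmirzaev24}; these appear as finitely many parametrized families of matrices of structure constants $A(\mathbf{c})$. For each representative I would apply the derivation criterion from the Preliminaries: $D\in Der(\mathbb{A})$ if and only if $DA=A(D\otimes I_2+I_2\otimes D)$. Writing $D=(d_{ij})$ and expanding, this becomes a homogeneous linear system $M(A)\,vec(D)=0$ of eight scalar equations in the four unknowns $d_{11},d_{12},d_{21},d_{22}$, with $M(A)$ of size $8\times 4$. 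The condition $Der(\mathbb{A})=\{0\}$ is then exactly $\operatorname{rank}M(A)=4$, which is a (rank) condition on the parameters $\mathbf{c}$.

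Second, for each family I would solve this linear system symbolically and extract the locus in parameter space on which the kernel is trivial. In characteristic $2$ several coefficients and trace relations appearing in this system collapse or change relative to the characteristic $\neq 2,3$ case treated in the preceding theorem, so both the underlying isomorphism classification and the resulting trivial-derivation loci genuinely differ; this is why the families $A_{1,2},A_{2,2},\dots,A_{10,2}$ are not mere specializations of $A_1,A_2,\dots$. The families that survive with a nonempty trivial-derivation locus are precisely those written down, and the explicit parameter conditions (such as $\beta_2\neq1$ in $A_{4,2}$, or ``$\alpha_4\neq 0$ or $\beta_2\neq 1$'' in $A_{2,2}$) are read off as exactly the conditions that force $\operatorname{rank}M(A)=4$.

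Third, to guarantee that the list is \emph{irredundant} and that each algebra appears up to isomorphism exactly once, I would use the base-change action $A_f=gA_e(g^{-1})^{\otimes 2}$ to normalize parameters within each family and to separate different families by isomorphism invariants. For the sporadic families $A_{8,2}$ and $A_{9,2}$ the normalizing base changes amount to solving polynomial equations in the entries of $g$: two representatives in such a family become isomorphic precisely when a certain polynomial acquires a root in $\mathbb{F}$ (the product $(\beta_1t^3+t+1)(\beta_1t^2+\beta_1t+1)$ for $A_{8,2}$, and $\beta_1+t^3$ for $A_{9,2}$). Imposing that these polynomials have no root in $\mathbb{F}$ then selects exactly one representative per orbit and simultaneously rules out the extra derivations that such a root would produce.

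The main obstacle is not any single deep idea but the organized bookkeeping across all families: one must verify \emph{completeness} (every two-dimensional algebra with $Der(\mathbb{A})=\{0\}$ is isomorphic to a listed one), \emph{irredundancy} (no two listed representatives are isomorphic, and within a family the parameters are canonical), and \emph{correctness of the characteristic-$2$-specific constraints}, all while carefully tracking the places where $2=0$ alters the rank computation. Because each step is a finite symbolic linear-algebra computation over $\mathbb{F}$, the argument is routine in principle — which is exactly why the paper defers it — but the characteristic-$2$ trace conditions and the root-existence conditions for the sporadic families are the delicate points that require the most care.
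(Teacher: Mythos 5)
Your proposal follows essentially the same route the paper intends: the paper states this theorem without proof, explicitly deferring to the classification of two-dimensional algebras in \cite{bekbaev24} and the description of their derivations in \cite{eshmirzaev24}, which is precisely your plan of running the criterion $DA=A(D\otimes I_2+I_2\otimes D)$ as an $8\times 4$ rank condition over each family of representatives and keeping the parameter loci where the kernel is trivial. One small caveat: the no-root conditions attached to $A_{8,2}$ and $A_{9,2}$ are inherited from the underlying classification's parametrization of isomorphism classes (they appear verbatim in the simple-algebra theorem of the same section, which has nothing to do with derivations), rather than being conditions that simultaneously select orbit representatives and kill extra derivations as you suggest; this misattribution does not, however, affect the validity of your method.
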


\begin{theorem}  In $Char.(\mathbb{F})=3$ case any two-dimensional algebra, which has only trivial derivation, is isomorphic to only one of the following such algebras given by MSC representatives
	\begin{itemize}
		\item
		$ A_{1,3}(\mathbf{c})=\left(
		\begin{array}{cccc}
			\alpha_1 & \alpha_2 &\alpha_2+1 & \alpha_4 \\
			\beta_1 & -\alpha_1 & 1-\alpha_1 & -\alpha_2
		\end{array}\right), \mbox{where}\ \mathbf{c}=\alpha_1,\alpha_2,\alpha_4,\beta_1)\in \mathbb{F}^4,$
		\item $ A_{2,3}(\mathbf{c})=\left(
		\begin{array}{cccc}
			\alpha_1 & 0 & 0 & \alpha_4 \\
			1& \beta_2 & 1-\alpha_1&0
		\end{array}\right), \mbox{where}\ \mathbf{c}=(\alpha_1,\alpha_4, \beta_2)\in \mathbb{F}^3,\\ \alpha_4\neq 0,$ 
		\item	$ A_{3,3}(\mathbf{c})=\left(
		\begin{array}{cccc}
			\alpha_1 & 0 & 0 & \alpha_4 \\
			0& \beta_2 & 1-\alpha_1&0
		\end{array}\right),$\ \ where $\mathbf{c}=(\alpha_1,\alpha_4, \beta_2)\in \mathbb{F}^3$,\\ $\alpha_4 \neq 0$, 
		\item	$ A_{4,3}(\mathbf{c})=\left(
		\begin{array}{cccc}
			0 & 1 & 1 & 0 \\
			\beta _1& \beta_2 & 1&-1
		\end{array}\right),\ \mbox{where}\ \mathbf{c}=(\beta_1, \beta_2)\in \mathbb{F}^2,$\
		\item $A_{6,3}(\mathbf{c})=\left(
		\begin{array}{cccc}
			\alpha_1 & 0 & 0 & \alpha_4 \\
			1& 1-\alpha_1 & -\alpha_1&0
		\end{array}\right), \mbox{where}\ \mathbf{c}=(\alpha_1,\alpha_4)\in \mathbb{F}^2, \alpha_4\neq 0,$
		\item $ A_{7,3}(\mathbf{c})=\left(
		\begin{array}{cccc}
			\alpha_1 & 0 & 0 & \alpha_4 \\
			0&1-\alpha_1 & -\alpha_1&0
		\end{array}\right)$ where $\mathbf{c}=(\alpha_1,\alpha_4)\in \mathbb{F}^2$, $\alpha_4 \neq 0,$ 
		\item	$ A_{8,3}(\mathbf{c})=\left(
		\begin{array}{cccc}
			0 & 1 & 1 & 0 \\
			\beta _1& 1 & 0&-1
		\end{array}\right),\ \mbox{where}\ \mathbf{c}=\beta_1\in\mathbb{F},$
	\end{itemize}
\end{theorem}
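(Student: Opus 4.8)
The plan is to deduce the classification from the complete, isomorphism-class list of all two-dimensional algebras over an arbitrary field recorded in \cite{bekbaev24, eshmirzaev24}, by isolating inside that master list, specialized to $Char(\mathbb{F})=3$, exactly those classes whose derivation algebra is trivial. The first step is to write down, for a master representative $A(\mathbf{c})=(A_1\vert A_2)$, the defining equations for a derivation. By the criterion recalled in the Preliminaries, $D=(d_{ij})\in Der(\mathbb{A})$ if and only if $DA=A(D\otimes I_2+I_2\otimes D)$, which for $n=2$ amounts to the two matrix equations
\[ DA_1=A_1D+A_1d_{11}+A_2d_{21},\qquad DA_2=A_2D+A_1d_{12}+A_2d_{22}. \]
This is a homogeneous linear system of $n^3=8$ scalar equations in the $n^2=4$ unknowns $d_{11},d_{12},d_{21},d_{22}$, governed by the matrix $M(A)$ from the proof of Theorem 3.1; the algebra has only trivial derivation exactly when $rank(M(A))=4$.

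Next I would carry out this rank computation for every family of the master list, doing all arithmetic in characteristic $3$ (so $2=-1$ and $3=0$), and solve for the parameter locus at which the $8\times 4$ coefficient matrix attains full column rank $4$. For the families $A_{2,3},A_{3,3},A_{6,3},A_{7,3}$ this should return the single open condition $\alpha_4\neq 0$, whereas for $A_{1,3},A_{4,3},A_{8,3}$ the derivation is already trivial for every admissible value of $\mathbf{c}$, so these survive without further restriction; any master family whose coefficient matrix cannot reach rank $4$ in characteristic $3$ is discarded. The result of this bookkeeping should be precisely the displayed list together with its indicated restrictions.

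To obtain the uniqueness clause --- that each such algebra is isomorphic to \emph{only one} representative --- I would use that the master classification already supplies pairwise non-isomorphic representatives together with a faithful parametrization, so that passing to the trivial-derivation sublocus cannot create new coincidences. It then remains to check, through the change-of-basis rule $A_f=gA_e(g^{-1})^{\otimes 2}$ with $g\in GL(2,\mathbb{F})$, that neither two distinct parameter vectors inside a single family nor two representatives drawn from different families can be conjugate under an invertible $g$; here the trace-type invariants of $A_1,A_2$ together with the rigid pattern of forced $0$ and $1$ entries in each MSC should suffice to separate the classes.

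The main obstacle I expect is not conceptual but the characteristic-$3$ degeneration of the generic situation: many polynomial relations that shape the $Char(\mathbb{F})\neq 2,3$ classification of Theorem 7.1 collapse or merge once $3=0$, which is exactly why the present list is shorter and why the derivation system must be solved afresh rather than specialized from the generic case. Tracking which master families actually survive in characteristic $3$, pinning down the sharp parameter constraint on each, and then certifying that the surviving representatives remain pairwise non-isomorphic over such a field, is the delicate part of the argument.
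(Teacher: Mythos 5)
Your proposal follows essentially the same route the paper intends: the paper states this theorem without proof in its Appendix, remarking only that it is obtained by combining the master classification of two-dimensional algebras over an arbitrary field in \cite{bekbaev24} with the description of the derivations of each representative in \cite{eshmirzaev24}, which is precisely your plan of computing the trivial-derivation locus family by family in characteristic $3$ and inheriting the pairwise non-isomorphism from the master list. The only content missing from your sketch is the routine rank computation for each family (already available in \cite{eshmirzaev24}), which the paper likewise omits.
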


\begin{theorem} In $Char.(\mathbb{F})\neq 2,3$ case any two-dimensional algebra, with only trivial automorphism, is isomorphic to only one of the following such algebras given by MSC representatives
	\begin{itemize}
		\item	$ A_{1}(\mathbf{c})=\left(
		\begin{array}{cccc}
			\alpha_1 & \alpha_2 &1+\alpha_2 & \alpha_4 \\
			\beta_1 & -\alpha_1 & 1-\alpha_1 & -\alpha_2
		\end{array}\right), \mbox{where}\ \mathbf{c}=(\alpha_1,\alpha_2,\alpha_4,\beta_1) \in \mathbb{F}^4,$ 
		\item	$ A_{2}(\mathbf{c})=\left(
		\begin{array}{cccc}
			\alpha_1 & 0 & 0 & \alpha_4 \\
			1& \beta_2 & 1-\alpha_1&0
		\end{array}\right),\ \mbox{where}\ \mathbf{c}=(\alpha_1,\alpha_4,\beta_2)\in\mathbb{F}^3, \alpha_4\neq 0,$
		\item	$ A_{4}(\mathbf{c})=\left(
		\begin{array}{cccc}
			0 & 1 & 1 & 0 \\
			\beta _1& \beta_2 & 1&-1
		\end{array}\right),\ \mbox{where}\ \mathbf{c}=(\beta_1, \beta_2)\in \mathbb{F}^2,$ 
		\item	$ A_{6}(\mathbf{c})=\left(
		\begin{array}{cccc}
			\alpha_1 & 0 & 0 & \alpha_4 \\
			1& 1-\alpha_1 & -\alpha_1&0
		\end{array}\right),\ \mbox{where}\ \mathbf{c}=(\alpha_1,\alpha_4)\in \mathbb{F}^2,\ \alpha_4\neq 0,$
		\item	$ A_{8}(\mathbf{c})=\left(
		\begin{array}{cccc}
			0 & 1 & 1 & 0 \\
			\beta _1& 1 & 0&-1
		\end{array}\right),\ \mbox{where}\ \mathbf{c}=\beta_1\in\mathbb{F},$
		\item	$A_{10}(\mathbf{c})=\left(
		\begin{array}{cccc}
			0 &1 & 1 &1 \\
			\beta_1 &0 &0 &-1
		\end{array}
		\right),$ where polynomial\\ $(\beta _1t^3-3t-1)(\beta_1t^2+\beta_1t+1)(\beta_1^2t^3+6\beta_1t^2+3\beta_1t+\beta_1-2)$ has no root in $\mathbb{F}$ and none of the following equations $\beta_1=\frac{(2d+1)^2}{d^2+d+1}$, $\beta_1=\frac{(2d+1)^2(d-1)}{d^3}$ has a solution in $\mathbb{F}$,
		\item	$A_{11}(\mathbf{c})=\left(
		\begin{array}{cccc}
			0 &0 & 0 &1 \\
			\beta_1 &0 &0 &0
		\end{array}
		\right),$ where polynomial $\beta _1 -t^3$ has no root in $\mathbb{F}$, $\mathbf{c}=\beta_1\neq 0$, and $d^3=1$ if and only if $d=1$.
	\end{itemize}
\end{theorem}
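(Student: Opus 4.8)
The plan is to read off the classification from the complete, already-available list of two-dimensional algebras up to isomorphism together with the explicit description of their automorphism groups, both contained in \cite{bekbaev24,eshmirzaev24}. The governing observation is that, for a fixed MSC $A=(A_1\vert A_2)$, the automorphism group coincides with the stabiliser of $A$ in $GL(2,\mathbb{F})$ for the isomorphism action $g\cdot A=gA(g^{-1})^{\otimes 2}$; equivalently $g\in Aut(\mathbb{A})$ exactly when $gA=A(g\otimes g)$ and $\det(g)\neq 0$. Since this stabiliser is an isomorphism invariant, it is enough to go through the finitely many canonical families $A_i(\mathbf{c})$ of the reference classification and, for each admissible value of $\mathbf{c}$, decide whether the unique solution of $gA=A(g\otimes g)$ is $g=I$.

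Concretely, for each canonical form I would put $g=\begin{pmatrix}a&b\\ c&d\end{pmatrix}$ and expand $gA=A(g\otimes g)$ into its scalar entries, obtaining a polynomial system in $a,b,c,d$ whose coefficients are polynomials in $\mathbf{c}$; the algebra has only trivial automorphism precisely when this system, together with $\det(g)\neq 0$, admits $g=I$ as its sole solution. The families then sort into three behaviours. For the generic forms $A_{1},A_{2},A_{4},A_{6},A_{8}$ the system forces $g=I$ for every admissible $\mathbf{c}$, so they enter the list unconditionally.

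For several families appearing in Theorem 7.1 the system always possesses a nonidentity solution, typically the involution $e_2\mapsto-e_2$ (or $e_1\mapsto-e_1$), which is genuinely nontrivial because $\text{Char}(\mathbb{F})\neq 2$; this is exactly why the forms $A_{3},A_{7},A_{12}$, present in the trivial-derivation list, are absent here. For the boundary families $A_{10}$ and $A_{11}$ a nonidentity automorphism exists only for special $\beta_1$: after eliminating the remaining entries of $g$, the system collapses, for $A_{10}$, to the rational constraints $\beta_1=\frac{(2d+1)^2}{d^2+d+1}$ and $\beta_1=\frac{(2d+1)^2(d-1)}{d^3}$ in an auxiliary scalar $d$, and, for $A_{11}$, to $d^3=1$; imposing $Aut(\mathbb{A})=\{I\}$ then forces these constraints to have no nontrivial solution in $\mathbb{F}$ — no $d$ at all for $A_{10}$, and only $d=1$ for $A_{11}$ — which is precisely the extra hypothesis attached to $A_{10}$ and $A_{11}$.

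The step I expect to be the main obstacle is this last parameter analysis for $A_{10}$ and $A_{11}$: one must push the polynomial elimination far enough to recover exactly the displayed rational expressions in $d$ and then verify that these genuinely enumerate all nonidentity automorphisms, so that no parameter value is wrongly kept or discarded. To close the argument, non-redundancy is inherited for free, since the retained forms are already pairwise non-isomorphic in the full classification and the trivial-automorphism condition is imposed pointwise in $\mathbf{c}$; hence every two-dimensional algebra with trivial automorphism is isomorphic to exactly one representative on the list.
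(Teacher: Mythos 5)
Your proposal is correct and takes essentially the same route as the paper: the paper states this theorem without proof, explicitly deferring to the complete classification of two-dimensional algebras in \cite{bekbaev24} and the description of all automorphisms of each canonical representative in \cite{eshmirzaev24}, which is precisely the case-by-case stabiliser check you outline (the unconditional families $A_1,A_2,A_4,A_6,A_8$, the exclusion of $A_3,A_7,A_{12}$ via the sign involutions $e_i\mapsto -e_i$, and the extra parameter constraints in $d$ for $A_{10}$ and $A_{11}$). Since the retained representatives are pairwise non-isomorphic in the full classification, the ``only one'' clause follows exactly as you say.
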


\begin{theorem} In $Char.(\mathbb{F})=2$ case any two-dimensional algebra, with only trivial automorphism, is isomorphic to only one of the following such algebras given by MSC representatives
	\begin{itemize}
		\item	$ A_{1,2}(\mathbf{c})=\left(
		\begin{array}{cccc}
			\alpha_1 & \alpha_2 &\alpha_2+1 & \alpha_4 \\
			\beta_1 & \alpha_1 & 1+\alpha_1 & \alpha_2
		\end{array}\right),\ \mbox{where}\ \mathbf{c}=(\alpha_1, \alpha_2, \alpha_4, \beta_1) \in \mathbb{F}^4,$
		\item	$A_{2,2}(\mathbf{c})=\left(
		\begin{array}{cccc}
			\alpha_1 & 0 & 0 & \alpha_4 \\
			1& \beta_2 & 1+\alpha_1&0
		\end{array}\right), \mbox{where}\ \mathbf{c}=(\alpha_1,\alpha_4,\beta_2)\in \mathbb{F}^3,\\ \alpha_4\neq 0,$  
		\item	$ A_{3,2}(\mathbf{c})=\left(
		\begin{array}{cccc}
			\alpha_1 & 0 & 0 & \alpha_4 \\
			0& \beta_2 & 1+\alpha_1&0
		\end{array}\right),$\ where $\mathbf{c}=(\alpha_1,\alpha_4,\beta_2)\in \mathbb{F}^3,\\ \alpha_4\neq 0,$\
		$ A_{3,2}(\alpha_1,0, 0)$, if $\mathbb{F}=\mathbb{Z}_2$,
		\item	$ A_{4,2}(\mathbf{c})=\left(
		\begin{array}{cccc}
			\alpha_1 & 1 & 1 & 0 \\
			\beta _1& \beta_2 & 1+\alpha_1&1
		\end{array}\right),\ \mbox{where}\ \mathbf{c}=(\alpha_1,\beta_1, \beta_2)\in \mathbb{F}^3, \beta_2=1$, 
		\item	$ A_{5,2}(\mathbf{c})=\left(
		\begin{array}{cccc}
			\alpha_1 & 0 & 0 & \alpha_4 \\
			1&1+\alpha_1 & \alpha_1&0
		\end{array}\right),\ \mbox{where}\ \mathbf{c}=(\alpha_1,\alpha_4)\in \mathbb{F}^2,\ \alpha_4\neq 0,$
		\item	$ A_{6,2}(\mathbf{c})=\left(
		\begin{array}{cccc}
			\alpha_1 & 0 & 0 & \alpha_4 \\
			0&1+\alpha_1 & \alpha_1&0
		\end{array}\right),$ where $\mathbf{c}=(\alpha_1,\alpha_4)\in \mathbb{F}^2$, $\alpha_4 \neq 0$,\\
		$ A_{6,2}(0,0)$, if $\mathbb{F}=\mathbb{Z}_2$.
		\item	$ A_{7,2}(\mathbf{c})=\left(
		\begin{array}{cccc}
			\alpha_1 & 1 & 1 & 0 \\
			\beta _1& 1+\alpha_1 & \alpha_1&1
		\end{array}\right),\ \mbox{where}\ \mathbf{c}=(\alpha_1,\beta_1)\in\mathbb{F}^2,$ $\alpha_1=1$,
		\item	$A_{8,2}(\mathbf{c})=\left(
		\begin{array}{cccc}
			0 &1 & 1 &1 \\
			\beta_1 &0 &0 &1
		\end{array}
		\right),$ where polynomial $(\beta _1t^3+t+1)(\beta _1t^2+\beta_1t+1)$ has no root in $\mathbb{F}$ and none of the following equations $\beta_1=\frac{1}{d^2+d+1}$, $\beta_1=\frac{d+1}{d^3}$ has a solution in $\mathbb{F}$, 
		\item $A_{9,2}(\mathbf{c})=\left(
		\begin{array}{cccc}
			0 &0 & 0 &1\\
			\beta_1 &0 &0 &0
		\end{array}
		\right),\  \mbox{where}\ \mathbf{c}=\beta_1\in\mathbb{F},$ polynomial $\beta_1+t^3$ has no root in $\mathbb{F}$ and $d^3=1$ if and only if $d=1$,
		\item	$A_{11,2}(0)=\left(
		\begin{array}{cccc}
			0 & 1 & 1 & 0 \\
			0 &0& 0 &1
		\end{array}
		\right)$, if $\mathbb{F}=\mathbb{F}_2$-two element field.	
		
	\end{itemize}	
\end{theorem}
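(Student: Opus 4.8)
The plan is to reduce the statement to a finite, explicit verification built on the complete classification of two-dimensional algebras already available in \cite{bekbaev24,eshmirzaev24}. In the $Char(\mathbb{F})=2$ case those references list a finite collection of parametrized families $A_{i,2}(\mathbf{c})$ whose members exhaust, without repetition, all isomorphism classes of two-dimensional algebras, and for each canonical representative they record the full automorphism group $Aut(A_{i,2}(\mathbf{c}))$ as an explicit algebraic subgroup of $GL(2,\mathbb{F})$ depending on $\mathbf{c}$. Granting this data, proving the theorem becomes a filtering problem: traverse the list of isomorphism classes and retain exactly those whose automorphism group is $\{I\}$, then repackage the survivors into the families displayed in the statement.

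The computational engine is the criterion from the Preliminaries. For $n=2$ an MSC is $A=(A_1\vert A_2)$ with $A_1,A_2\in Mat(2\times 2,\mathbb{F})$, and $g\in Aut(\mathbb{A})$ if and only if $\det(g)\neq 0$ and $gA=A(g\otimes g)$, equivalently $gA_i=\sum_{j=1}^{2}A_jg_{ji}g$ for $i=1,2$. Writing $g=\begin{pmatrix}p&q\\ r&s\end{pmatrix}$ and expanding, this collapses into a system of quadratic polynomial equations in $p,q,r,s$ whose coefficients are the entries of $A$, i.e. the components of $\mathbf{c}$. For each family I would solve this system with $p,q,r,s$ as unknowns and $\mathbf{c}$ as parameters, and then isolate the locus of $\mathbf{c}$ for which $g=I$ is the only solution with $\det g\neq 0$; that locus is precisely the parameter constraint attached to each family in the statement (for instance $\alpha_4\neq 0$ in $A_{2,2}$, $\beta_2=1$ in $A_{4,2}$, or $\alpha_1=1$ in $A_{7,2}$).

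Two features deserve separate care. First, the discrete families $A_{8,2},A_{9,2}$ and the exceptional $A_{11,2}(0)$: here a putative nonidentity automorphism is governed by a scalar $d\in\mathbb{F}^{\times}$ satisfying a polynomial relation, so triviality of $Aut$ is equivalent to nonexistence of a root of the displayed polynomial together with nonsolvability of the auxiliary equations such as $\beta_1=\tfrac{1}{d^2+d+1}$ and $\beta_1=\tfrac{d+1}{d^3}$, which are exactly the parametrizations of the unwanted automorphisms. Second, the small-field phenomena: over $\mathbb{F}=\mathbb{Z}_2$ a would-be nontrivial $g$ often forces $d$ to lie outside $\mathbb{F}_2$, so several generically non-rigid members become rigid; this accounts for the isolated representatives $A_{3,2}(\alpha_1,0,0)$, $A_{6,2}(0,0)$ and $A_{11,2}(0)$ that are listed only when $\mathbb{F}=\mathbb{F}_2$.

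It remains to justify the phrase ``isomorphic to only one of''. Within a fixed family the residual self-isomorphisms act on the parameter vector through $A_f=gA_e(g^{-1})^{\otimes 2}$, and across families the representatives are already pairwise non-isomorphic by the underlying classification in \cite{bekbaev24,eshmirzaev24}; thus uniqueness of the representative is inherited from that classification and needs only the verification that the prescribed parameter normalizations leave no surviving redundancy. The conceptual content throughout is light, but the case-by-case solution of the automorphism equations together with the detection of the $\mathbb{F}_2$-exceptional algebras is long and error-prone, and this exhaustive case analysis --- rather than any single hard step --- is the main obstacle, which is precisely why the paper records the result without the full computation.
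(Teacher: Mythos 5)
Your proposal takes essentially the same route as the paper: the paper states this theorem without proof, remarking only that it follows by combining the complete classification of two-dimensional algebras in \cite{bekbaev24} with the explicit description of their automorphism groups in \cite{eshmirzaev24}, which is precisely the filter-the-classification strategy (including the small-field $\mathbb{F}_2$ exceptions and the uniqueness inherited from the classification) that you outline. Your plan is sound and matches the intended argument; the only caveat is that, like the paper, you defer rather than execute the case-by-case verification.
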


\begin{theorem} In $Char.(\mathbb{F})=3$ case any two-dimensional algebra, with only trivial automorphism, is isomorphic to only one of the following such algebras given by MSC representatives
	\begin{itemize}
		\item
		$ A_{1,3}(\mathbf{c})=\left(
		\begin{array}{cccc}
			\alpha_1 & \alpha_2 &\alpha_2+1 & \alpha_4 \\
			\beta_1 & -\alpha_1 & 1-\alpha_1 & -\alpha_2
		\end{array}\right), \mbox{where}\ \mathbf{c}=(\alpha_1,\alpha_2,\alpha_4,\beta_1)\in \mathbb{F}^4,$
		\item	$ A_{2,3}(\mathbf{c})=\left(
		\begin{array}{cccc}
			\alpha_1 & 0 & 0 & \alpha_4 \\
			1& \beta_2 & 1-\alpha_1&0
		\end{array}\right),\ \mbox{where}\ \mathbf{c}=(\alpha_1,\alpha_4, \beta_2)\in \mathbb{F}^3,\\ \alpha_4\neq 0,$ 
		\item	$ A_{4,3}(\mathbf{c})=\left(
		\begin{array}{cccc}
			0 & 1 & 1 & 0 \\
			\beta _1& \beta_2 & 1&-1
		\end{array}\right),\ \mbox{where}\ \mathbf{c}=(\beta_1, \beta_2)\in \mathbb{F}^2,$
		\item	$ A_{6,3}(\mathbf{c})=\left(
		\begin{array}{cccc}
			\alpha_1 & 0 & 0 & \alpha_4 \\
			1& 1-\alpha_1 & -\alpha_1&0
		\end{array}\right),\ \mbox{where}\ \mathbf{c}=(\alpha_1,\alpha_4)\in \mathbb{F}^2,\\ \alpha_4\neq 0,$
		\item	$ A_{8,3}(\mathbf{c})=\left(
		\begin{array}{cccc}
			0 & 1 & 1 & 0 \\
			\beta _1& 1 & 0&-1
		\end{array}\right),\ \mbox{where}\ \mathbf{c}=\beta_1\in\mathbb{F},$ 
		\item	$ A_{9,3}(\mathbf{c})=\left(
		\begin{array}{cccc}
			0 &1 & 1 &1 \\
			\beta_1 &0 &0 &-1
		\end{array}
		\right),$ where polynomial $(\beta _1 -t^3)(\beta _1t^2+\beta _1t+1)(\beta_1^2t^3+\beta_1-2)$ has no root in $\mathbb{F}$, the following equations $\beta_1=\frac{(2d+1)^2}{d^2+d+1}$ has no solution in $\mathbb{F},$
		\item	$A_{10,3}(\mathbf{c})=\left(
		\begin{array}{cccc}
			0 &0 & 0 &1 \\
			\beta_1 &0 &0 &0
		\end{array}
		\right),$\ where polynomial $\beta_1 -t^3$ has no root, $\mathbf{c}=\beta_1\neq 0$ and $d^3=1$ if and only if $d=1$.
		
	\end{itemize}
\end{theorem}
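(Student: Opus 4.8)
The plan is to derive this classification from two ingredients that are already available: the complete list, up to isomorphism, of all two-dimensional algebras given in \cite{bekbaev24}, together with the isomorphism criteria within and between the canonical families $A_k(\mathbf{c})$, and the explicit description of the automorphism group of each canonical representative obtained in \cite{eshmirzaev24}. Thus no algebra is classified from scratch; one only has to intersect the known classification with the condition $Aut(\mathbb{A})=\{I\}$ and then check that the surviving representatives remain pairwise non-isomorphic, which is exactly what the phrase ``only one of the following'' asserts.

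First I would fix a family $A_k(\mathbf{c})$ from the $Char(\mathbb{F})=3$ master list and write out the automorphism equations $gA_k(\mathbf{c})=A_k(\mathbf{c})(g\otimes g)$ for $g=(g_{ij})\in GL(2,\mathbb{F})$, using the criterion recalled in the Preliminaries. For a $2\times 2$ matrix $g$ this is a system of $8$ polynomial equations in the $4$ entries of $g$, always solved by $g=I$. Solving it yields $Aut(A_k(\mathbf{c}))$ as a subgroup of $GL(2,\mathbb{F})$ whose non-identity elements, when present, are parametrised by a scalar $d$ subject to a polynomial relation with coefficients in $\mathbf{c}$; this is the content imported from \cite{eshmirzaev24}. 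Imposing $Aut=\{I\}$ then amounts to requiring that these relations have no admissible solution $d\neq 1$ in $\mathbb{F}$, which is precisely how the conditions of the theorem—such as the non-existence of a root of $(\beta_1-t^3)(\beta_1t^2+\beta_1t+1)(\beta_1^2t^3+\beta_1-2)$ or of a solution of $\beta_1=\frac{(2d+1)^2}{d^2+d+1}$ for $A_{9,3}$—arise.

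A feature specific to characteristic $3$ that I would exploit throughout is the factorisation $t^3-1=(t-1)^3$, so that $d^3=1$ forces $d=1$. This collapses every scaling automorphism coming from a would-be cube root of unity, explains why families that split in the $Char(\mathbb{F})\neq 2,3$ case merge here, and makes the side condition ``$d^3=1$ if and only if $d=1$'' attached to $A_{10,3}$ automatic. After computing the trivial-automorphism locus of each family, I would re-apply the isomorphism criteria of \cite{bekbaev24} to the retained parameter ranges: for families such as $A_{1,3},A_{2,3},A_{4,3},A_{6,3},A_{8,3}$ the listed parameters are already invariants, while for $A_{9,3}$ and $A_{10,3}$ the extra ``no root / no solution in $\mathbb{F}$'' conditions on $\beta_1$ must be shown to cut out exactly one representative per isomorphism class.

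The main obstacle will be disentangling the two roles played by the single matrix equation $g\,A_k(\mathbf{c})=A_k(\mathbf{c}')(g\otimes g)$: when $\mathbf{c}=\mathbf{c}'$ a non-identity solution is a nontrivial automorphism, whereas for $\mathbf{c}\neq\mathbf{c}'$ it is an isomorphism between two parameter values, and both are governed by the same cubic or quadratic relation in the scaling parameter $d$. Keeping this bookkeeping consistent—so that the polynomial non-existence conditions simultaneously guarantee triviality of $Aut$ and uniqueness of the representative—is the delicate step; the remaining work is the routine elimination of the off-diagonal entries of $g$ and the verification, family by family, that the listed conditions are mutually exclusive and exhaustive.
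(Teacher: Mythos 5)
Your proposal follows exactly the route the paper itself takes: the paper presents this theorem without proof, stating only that it is obtained by combining the classification of all two-dimensional algebras from \cite{bekbaev24} with the description of automorphisms of the canonical representatives in \cite{eshmirzaev24}, and then imposing $Aut(\mathbb{A})=\{I\}$ on each family while keeping the representatives pairwise non-isomorphic. Your plan is a correct elaboration of that same approach, including the apt characteristic-$3$ observation that $t^3-1=(t-1)^3$ makes the condition ``$d^3=1$ if and only if $d=1$'' automatic.
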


In \cite{ahmed1}, in particular, one can find a classification, up to isomorphism, of two-dimensional simple algebras given over a field where every second and third order polynomial has a root. Here we would like to close finally this problem for two-dimensional algebras over any field. For that one can relay on a complete classification, up to isomorphism, of all two-dimensional algebras presented in \cite{bekbaev24} and Proposition 5.1, Proposition 6.1 presented in \cite{ahmed1}. 

\begin{theorem} In $Char.(\mathbb{F})\neq 2,3$ case any two-dimensional simple algebra is isomorphic to one of the following such algebras given by MSC representatives	\begin{itemize}
		\item	$ A_{1}(\mathbf{c})=\left(
		\begin{array}{cccc}
			\alpha_1 & \alpha_2 &1+\alpha_2 & \alpha_4 \\
			\beta_1 & -\alpha_1 & 1-\alpha_1 & -\alpha_2
		\end{array}\right),\ \mbox{where}\ \mathbf{c}=(\alpha_1, \alpha_2, \alpha_4, \beta_1) \in \mathbb{F}^4$;\\ $\beta_1\neq 2\alpha_1+\alpha_2$ or $\alpha_4\neq -\alpha_1-2\alpha_2$, 
		\item	$A_{2}(\mathbf{c})=\left(
		\begin{array}{cccc}
			\alpha_1 & 0 & 0 & \alpha_4 \\
			1& \beta_2 & 1-\alpha_1&0
		\end{array}\right), \mbox{where}\ \mathbf{c}=(\alpha_1,\alpha_4,\beta_2)\in \mathbb{F}^3,\ \alpha_4\neq 0;$\
		\item $ A_{3}(\mathbf{c})=\left(
		\begin{array}{cccc}
			\alpha_1 & 0 & 0 & \alpha_4 \\
			0& \beta_2 & 1-\alpha_1&0
		\end{array}\right),$ where $\mathbf{c}=(\alpha_1,\alpha_4, \beta_2)\in \mathbb{F}^3$;\\ $\beta_2 \neq 1-\alpha_1, \alpha_4\neq 0,$ or $\beta_2 =1-\alpha_1, \alpha_4\neq 0, \alpha_1\neq 1, \alpha_1\neq \frac{1}{2},$
		\item $ A_{4}(\mathbf{c})=\left(
		\begin{array}{cccc}
			0 & 1 & 1 & 0 \\
			\beta _1& \beta_2 & 1&-1
		\end{array}\right),\ \mbox{where}\ \mathbf{c}=(\beta_1, \beta_2)\in \mathbb{F}^2;$  $\beta _2\neq 1$, or $\beta _1\neq -\frac{1}{4}, \beta _2= 1$,
		\item $ A_{6}(\mathbf{c})=\left(
		\begin{array}{cccc}
			\alpha_1 & 0 & 0 & \alpha_4 \\
			1& 1-\alpha_1 & -\alpha_1&0
		\end{array}\right),\ \mbox{where}\ \mathbf{c}=(\alpha_1,\alpha_4)\in \mathbb{F}^2;\ \alpha_4\neq 0;$
		\item $ A_{7}(\mathbf{c})=\left(
		\begin{array}{cccc}
			\alpha_1 & 0 & 0 & \alpha_4 \\
			0&1-\alpha_1 & -\alpha_1&0
		\end{array}\right)$, where $\mathbf{c}=(\alpha_1,\alpha_4)\in \mathbb{F}^2$; $\alpha_4\neq 0$,\\ 
		\item $ A_{8}(\mathbf{c})=\left(
		\begin{array}{cccc}
			0 & 1 & 1 & 0 \\
			\beta _1& 1 & 0&-1
		\end{array}\right),\ \mbox{where}\ \mathbf{c}=\beta_1\in\mathbb{F};$ 
		\item $A_{10}(\mathbf{c})=\left(
		\begin{array}{cccc}
			0 &1 & 1 &1 \\
			\beta_1 &0 &0 &-1
		\end{array}
		\right),$ where polynomial $(\beta _1t^3-3t-1)(\beta_1t^2+\beta_1t+1)(\beta_1^2t^3+6\beta_1t^2+3\beta_1t+\beta_1-2)$ has no root in $\mathbb{F}$, 
		\item $A_{11}(\mathbf{c})=\left(
		\begin{array}{cccc}
			0 &0 & 0 &1 \\
			\beta_1 &0 &0 &0
		\end{array}
		\right),$ where polynomial $\beta _1 -t^3$ has no root in $\mathbb{F}$,  $\mathbf{c}=\beta_1\neq 0$,
		\item $A_{12}(\mathbf{c})=\left(
		\begin{array}{cccc}
			0 & 1 & 1 &0  \\
			\beta_1 &0& 0 &-1
		\end{array}
		\right),\ \mbox{where}\ \mathbf{c}=\beta_1\in \mathbb{F}; \beta_1\neq 0.
		$  \end{itemize}
	
\end{theorem}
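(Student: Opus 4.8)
The plan is to bootstrap from the complete isomorphism classification of two-dimensional algebras in \cite{bekbaev24} rather than to classify simple algebras from scratch. The key observation is that simplicity is an isomorphism invariant: any algebra isomorphism carries two-sided ideals bijectively to two-sided ideals, so whether a representative $A_i(\mathbf c)$ is simple depends only on its isomorphism class. Consequently the whole problem reduces to deciding, inside each family $A_i(\mathbf c)$ of the full list, precisely which parameter values $\mathbf c$ yield a simple algebra; the desired classification is then the restriction of that list to the simple locus, and the required pairwise non-isomorphism of the surviving representatives comes for free from \cite{bekbaev24}.

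To carry this out I would specialize the simplicity criterion of Section 5 to dimension two. There, $A=(A_1\vert A_2)$ is simple iff the four matrices $A_1,A_2,A_1^o,A_2^o$ (left and right multiplications by $e_1,e_2$) have no common proper nonzero invariant subspace of $\mathbb F^2$. Since the only proper nonzero subspaces of $\mathbb F^2$ are lines, this says exactly that the four matrices have no common eigenvector $v=(x,y)^\top\in\mathbb F^2\setminus\{0\}$ defined over $\mathbb F$. For $M=\begin{pmatrix}a&b\\c&d\end{pmatrix}$ the condition $Mv\parallel v$ is the single binary quadratic equation $-cx^2+(a-d)xy+by^2=0$, so a common eigenline is a common zero in $\mathbb P^1(\mathbb F)$ of the four quadratic forms attached to $A_1,A_2,A_1^o,A_2^o$; the algebra is simple precisely when this overdetermined system has no $\mathbb F$-rational solution. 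This is the content of Propositions 5.1 and 6.1 of \cite{ahmed1}, on which I would lean.

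Then I would run the test family by family, reading $A_1^o,A_2^o$ off the MSC and intersecting the loci. For the multi-parameter families $A_1,A_2,A_3,A_4,A_6,A_7,A_8$, subtracting suitable pairs of the quadratics typically produces linear relations that pin the candidate eigenline down to a single rational line; substituting it into the remaining forms collapses the existence of a common point to one or two polynomial identities in the parameters, whose negation gives the stated open conditions. For instance, for $A_1$ the only candidate is $v=(1,1)$, which is a common eigenvector exactly when $\beta_1=2\alpha_1+\alpha_2$ and $\alpha_4=-\alpha_1-2\alpha_2$, so $A_1(\mathbf c)$ is simple iff $\beta_1\neq 2\alpha_1+\alpha_2$ or $\alpha_4\neq-\alpha_1-2\alpha_2$; similarly $A_4$ fails to be simple only at $\beta_2=1,\ \beta_1=-\frac14$. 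For the rigid commutative families $A_{10},A_{11},A_{12}$ the left and right multiplications coincide, leaving only two quadratics whose shared eigenlines are rational, and the canonical-form conditions already imposed in \cite{bekbaev24} (the ``no root in $\mathbb F$'' hypotheses) turn out to exclude exactly the parameter values producing a common eigenline; these families therefore survive with their canonical conditions essentially intact, $A_{12}$ being simple exactly for $\beta_1\neq0$.

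The bulk of the work, and the main obstacle, is therefore not a single hard step but the careful case analysis over an arbitrary field: because an ideal must be an $\mathbb F$-subspace, I must at each stage verify whether the common eigenline is genuinely rational over $\mathbb F$ rather than only over $\overline{\mathbb F}$, and I must reconcile each resulting simplicity condition with the canonical-form restrictions already present in \cite{bekbaev24} so that the final list contains exactly one representative per isomorphism class of simple algebra. Once this bookkeeping is done consistently across the ten families, the stated classification follows; this is why the author records it without the (routine but lengthy) computation.
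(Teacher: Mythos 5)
Your proposal is correct and follows essentially the same route the paper takes: the paper records this theorem without proof but states explicitly that one can rely on the complete classification of two-dimensional algebras in \cite{bekbaev24} together with Propositions 5.1 and 6.1 of \cite{ahmed1}, which is exactly your reduction (simplicity is an isomorphism invariant, so one restricts each canonical family to its simple locus via the common-invariant-line test of Section 5). Your sample computations, e.g.\ $A_1(\mathbf{c})$ simple iff $\beta_1\neq 2\alpha_1+\alpha_2$ or $\alpha_4\neq -\alpha_1-2\alpha_2$, and $A_4(\mathbf{c})$ failing only at $\beta_2=1$, $\beta_1=-\frac{1}{4}$, agree with the parameter conditions in the statement.
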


\begin{theorem} In $Char.(\mathbb{F})= 2$ case any two-dimensional simple algebra is isomorphic to one of the following such algebras given by MSC representatives 	\begin{itemize}
		\item	
		$ A_{1,2}(\mathbf{c})=\left(
		\begin{array}{cccc}
			\alpha_1 & \alpha_2 &\alpha_2+1 & \alpha_4 \\
			\beta_1 & \alpha_1 & 1+\alpha_1 & \alpha_2
		\end{array}\right),\ \mbox{where}\ \mathbf{c}=(\alpha_1, \alpha_2, \alpha_4, \beta_1) \in \mathbb{F}^4;$\\ $\alpha_4\neq \alpha_1$ or $\beta_1\neq \alpha_2$,
		\item $ A_{2,2}(\mathbf{c})=\left(
		\begin{array}{cccc}
			\alpha_1 & 0 & 0 & \alpha_4 \\
			1& \beta_2 & 1+\alpha_1&0
		\end{array}\right), \mbox{where}\ \mathbf{c}=(\alpha_1,\alpha_4,\beta_2)\in \mathbb{F}^3,\ \alpha_4\neq 0;$\ $\beta_2\neq 1+\alpha_1$ or $\beta_2= 1+\alpha_1=\alpha_4\neq 0$,
		\item 	$ A_{3,2}(\mathbf{c})=\left(
		\begin{array}{cccc} \alpha_1 & 0 & 0 & \alpha_4 \\
			0& \beta_2 & 1+\alpha_1&0
		\end{array}\right),$ where $\mathbf{c}=(\alpha_1,\alpha_4, \beta_2)\in \mathbb{F}^3$;\\ $\beta_2\neq 1+\alpha_1, \alpha_4\neq 0$ or $\beta_2=0, \alpha_1=1, \alpha_4\neq 0$,
		\item	$ A_{4,2}(\mathbf{c})=\left(
		\begin{array}{cccc}
			\alpha_1 & 1 & 1 & 0 \\
			\beta _1& \beta_2 & 1+\alpha_1&1
		\end{array}\right),\ \mbox{where}\ \mathbf{c}=(\alpha_1,\beta_1, \beta_2)\in \mathbb{F}^3;$\\ $\beta_2\neq 1+\alpha_1,$ or $\beta_2=0, \alpha_1=1, \beta_1=a+a^2$ for some $a\in \mathbb{F}$,
		\item	$ A_{5,2}(\mathbf{c})=\left(
		\begin{array}{cccc}
			\alpha_1 & 0 & 0 & \alpha_4 \\
			1&1+\alpha_1 & \alpha_1&0
		\end{array}\right),\ \mbox{where}\ \mathbf{c}=(\alpha_1,\alpha_4)\in \mathbb{F}^2,\ \alpha_4\neq 0;$
		\item	$ A_{6,2}(\mathbf{c})=\left(
		\begin{array}{cccc}
			\alpha_1 & 0 & 0 & \alpha_4 \\
			0&1+\alpha_1 & \alpha_1&0
		\end{array}\right),$ where $\mathbf{c}=(\alpha_1,\alpha_4)\in \mathbb{F}^2$, $ \alpha_4\neq 0$;\ 
		\item	$A_{7,2}(\mathbf{c})=\left(
		\begin{array}{cccc}
			\alpha_1 & 1 & 1 & 0 \\
			\beta _1& 1+\alpha_1 & \alpha_1&1
		\end{array}\right),\ \mbox{where}\ \mathbf{c}=(\alpha_1,\beta_1)\in\mathbb{F}^2;$
		\item	$A_{8,2}(\mathbf{c})=\left(
		\begin{array}{cccc}
			0 &1 & 1 &1 \\
			\beta_1 &0 &0 &1
		\end{array}
		\right),$ where polynomial\\ $(\beta _1t^3+t+1)(\beta _1t^2+\beta_1t+1)$\ has no root in $\mathbb{F}$;
		\item	$A_{9,2}(\mathbf{c})=\left(
		\begin{array}{cccc}
			0 &0 & 0 &1\\
			\beta_1 &0 &0 &0
		\end{array}
		\right),\  \mbox{where}\ \mathbf{c}=\beta_1\in\mathbb{F},$ polynomial $\beta_1+t^3$ has no root in $\mathbb{F}$;
		\item $A_{11,2}=\left(
		\begin{array}{cccc}
			0 & 1 & 1 & 0 \\
			\beta_1 &0& 0 &1
		\end{array}
		\right),$ where $\beta_1=a^2$ for some $a\in \mathbb{F}$.
	\end{itemize}
\end{theorem}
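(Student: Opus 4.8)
The plan is to deduce the classification from two ingredients that are already available: the complete list, up to isomorphism, of all two-dimensional algebras over an arbitrary field given in \cite{bekbaev24}, together with the simplicity criterion recorded in Section 5. In characteristic $2$ the former presents every isomorphism class by one of the families $A_{1,2},A_{2,2},\dots$ appearing in the statement (the same representatives used in Theorems 7.2 and 7.5), so the only remaining task is to single out, inside each family, the parameter values $\mathbf{c}$ for which the algebra is simple. Since these representatives come from a classification up to isomorphism, the resulting sublist automatically consists of pairwise non-isomorphic algebras, which yields the ``is isomorphic to one of the following'' conclusion once the simple locus of each family has been determined.

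First I would specialise the criterion of Section 5 to $n=2$. A proper nonzero two-sided ideal of a two-dimensional algebra is a line $\langle v\rangle\subset\mathbb{F}^2$, and $\langle v\rangle$ is $\overline{A}$-invariant precisely when $v$ is a simultaneous eigenvector of the four $2\times2$ matrices $A_1,A_2,A_1^o,A_2^o$ (the first two encode left multiplication, the last two right multiplication). Hence $A$ is simple if and only if these four matrices admit no common eigenvector in $\mathbb{F}^2$. For each representative $A_{i,2}(\mathbf{c})$ I would write out $A_1,A_2,A_1^o,A_2^o$, put $v=(x,y)^{\mathsf T}$, and analyse the system $A_1v\parallel v$, $A_2v\parallel v$, $A_1^ov\parallel v$, $A_2^ov\parallel v$; eliminating $x,y$ converts the existence of a common eigenvector into one or two polynomial conditions on $\mathbf{c}$, whose negation is exactly the simplicity constraint listed. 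For instance the condition $\alpha_4\neq\alpha_1$ or $\beta_1\neq\alpha_2$ attached to $A_{1,2}$ is the characteristic-$2$ specialisation of the condition $\beta_1\neq 2\alpha_1+\alpha_2$ or $\alpha_4\neq-\alpha_1-2\alpha_2$ of Theorem 7.7. Here Propositions 5.1 and 6.1 of \cite{ahmed1} can be invoked to shortcut the generic families, since they already describe when an algebra in such a normal form possesses a one-dimensional ideal.

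The main obstacle will be the bookkeeping rather than any single hard step: the analysis must be carried out family by family, and characteristic $2$ forces several degenerations (scalars such as $2\alpha_i$ vanish, and matrices that were distinguishable in odd characteristic now coincide), so the normal forms and their simple loci differ from the $\mathrm{Char}\neq 2,3$ case and must be recomputed from scratch. A second delicate point is that $\mathbb{F}$ is arbitrary: for the anisotropic families $A_{8,2}$, $A_{9,2}$, $A_{11,2}$ a common eigenvector exists over $\mathbb{F}$ if and only if an associated quadratic or cubic acquires an $\mathbb{F}$-rational root, so the conditions that $(\beta_1t^3+t+1)(\beta_1t^2+\beta_1t+1)$ have no root and that $\beta_1+t^3$ have no root, inherited from the isomorphism classification, are precisely what guarantees the absence of such an eigenvector. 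I would check that these are equivalent to the irreducibility needed for simplicity, paying particular attention to the two-element field exceptions and to the isolated representative $A_{11,2}$ with $\beta_1=a^2$. Collecting the per-family constraints and discarding the non-simple parameter values then yields exactly the stated list.
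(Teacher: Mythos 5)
Your proposal takes essentially the same route the paper itself indicates: the paper presents this theorem without proof, stating only that one should rely on the complete classification of two-dimensional algebras in \cite{bekbaev24} together with Propositions 5.1 and 6.1 of \cite{ahmed1}, which is exactly your strategy of running the Section~5 invariance criterion (common eigenvectors of $A_1,A_2,A_1^o,A_2^o$ in the $n=2$ case) over each normal form to isolate the simple locus. Your elaboration of the criterion and of the characteristic-$2$ and rationality subtleties is consistent with the stated parameter conditions, so the plan matches the paper's intended argument.
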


\begin{theorem} In $Char.(\mathbb{F})=3$ case any two-dimensional simple algebra is isomorphic to one of the following such algebras given by MSC representatives 	\begin{itemize}
		\item	
		$ A_{1,3}(\mathbf{c})=\left(
		\begin{array}{cccc}
			\alpha_1 & \alpha_2 &\alpha_2+1 & \alpha_4 \\
			\beta_1 & -\alpha_1 & 1-\alpha_1 & -\alpha_2
		\end{array}\right),\ \mbox{where}\ \mathbf{c}=(\alpha_1, \alpha_2, \alpha_4, \beta_1) \in \mathbb{F}^4;$\ $\beta_1\neq 2\alpha_1+\alpha_2$ or $\alpha_4\neq -\alpha_1-2\alpha_2$, 
		\item $ A_{2,3}(\mathbf{c})=\left(
		\begin{array}{cccc}
			\alpha_1 & 0 & 0 & \alpha_4 \\
			1& \beta_2 & 1-\alpha_1&0
		\end{array}\right),\ \mbox{where}\ \mathbf{c}=(\alpha_1,\alpha_4,\beta_2)\in \mathbb{F}^3,\\ \alpha_4\neq 0,$
		\item $ A_{3,3}(\mathbf{c})=\left(
		\begin{array}{cccc}
			\alpha_1 & 0 & 0 & \alpha_4 \\
			0& \beta_2 & 1-\alpha_1&0
		\end{array}\right),$\ \ where $\mathbf{c}=(\alpha_1,\alpha_4, \beta_2)\in \mathbb{F}^3$;\\ $\beta_2 \neq 1-\alpha_1, \alpha_4\neq 0,$ or $\beta_2 =1-\alpha_1, \alpha_4\neq 0, \alpha_1\neq 1, \alpha_1\neq \frac{1}{2},$
		\item $ A_{4,3}(\mathbf{c})=\left(
		\begin{array}{cccc}
			0 & 1 & 1 & 0 \\
			\beta _1& \beta_2 & 1&-1
		\end{array}\right),\ \mbox{where}\ \mathbf{c}=(\beta_1, \beta_2)\in \mathbb{F}^2;$ $\beta _2\neq 1$, or $\beta _1\neq -1, \beta _2= 1$,
		\item $ A_{6,3}(\mathbf{c})=\left(
		\begin{array}{cccc}
			\alpha_1 & 0 & 0 & \alpha_4 \\
			1& 1-\alpha_1 & -\alpha_1&0
		\end{array}\right),\ \mbox{where}\ \mathbf{c}=(\alpha_1,\alpha_4)\in\mathbb{F}^2, \alpha_4\neq 0,$
		\item $ A_{7,3}(\mathbf{c})=\left(
		\begin{array}{cccc}
			\alpha_1 & 0 & 0 & \alpha_4 \\
			0&1-\alpha_1 & -\alpha_1&0
		\end{array}\right),$ where $\mathbf{c}=(\alpha_1,\alpha_4)\in \mathbb{F}^2$, \ \
		\item $ A_{8,3}(\mathbf{c})=\left(
		\begin{array}{cccc}
			0 & 1 & 1 & 0 \\
			\beta _1& 1 & 0&-1
		\end{array}\right),\ \mbox{where}\ \mathbf{c}=\beta_1\in\mathbb{F},$\\
		\item $ A_{9,3}(\mathbf{c})=\left(
		\begin{array}{cccc}
			0 &1 & 1 &1 \\
			\beta_1 &0 &0 &-1
		\end{array}
		\right),$ where polynomial\\ $(\beta _1 -t^3)(\beta _1t^2+\beta _1t+1)(\beta_1^2t^3+\beta_1-2)$ has no root in $\mathbb{F}$,
		\item $A_{10,3}(\mathbf{c})=\left(
		\begin{array}{cccc}
			0 &0 & 0 &1 \\
			\beta_1 &0 &0 &0
		\end{array}
		\right),$\ where polynomial $\beta_1 -t^3$ has no root, $\mathbf{c}=\beta_1\neq 0$,
		\item $A_{11,3}(\mathbf{c})=\left(
		\begin{array}{cccc}
			0 & 1 & 1 &0  \\
			\beta_1 &0& 0 &-1
		\end{array}
		\right),\ \mbox{where}\ \mathbf{c}=\beta_1\in \mathbb{F},\ \beta_1\neq 0,$
		\item $A_{12,3}=\left(
		\begin{array}{cccc}
			1 & 0 & 0 & 0 \\
			1 &-1&-1 &0\end{array}\right).$\
	\end{itemize}
\end{theorem}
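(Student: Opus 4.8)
The plan is to reduce simplicity of a two-dimensional algebra to a purely projective non-degeneracy condition, and then to run that condition through the complete isomorphism classification of two-dimensional algebras from \cite{bekbaev24}, whose characteristic-$3$ representatives are exactly the families $A_{1,3},A_{2,3},\dots,A_{12,3}$ occurring in the list. By the simplicity criterion of Section 5, the algebra with MSC $A=(A_1\vert A_2)$ is simple if and only if the four matrices $A_1,A_2,A^o_1,A^o_2\in Mat(2\times 2,\mathbb{F})$ admit no common nontrivial $\overline{A}$-invariant subspace. Since the only proper nontrivial subspaces of $\mathbb{F}^2$ are the lines $\langle v\rangle$, this is equivalent to saying that $A_1,A_2,A^o_1,A^o_2$ have no common eigenvector over $\mathbb{F}$.

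First I would translate the common-eigenvector condition into the vanishing of binary quadratic forms. Writing $v=(x,y)^\top$ and, for $M=\begin{pmatrix}a&b\\ c&d\end{pmatrix}$, setting $q_M(x,y)=\det\big[\,Mv\mid v\,\big]=-c\,x^2+(a-d)\,xy+b\,y^2$, invariance of $\langle v\rangle$ under $M$ is precisely $q_M(x,y)=0$. Hence the algebra is simple if and only if the four forms $q_{A_1},q_{A_2},q_{A^o_1},q_{A^o_2}$ have no common zero in $\mathbb{P}^1(\mathbb{F})$. This recasts the whole problem as: for each normal form in the char-$3$ classification, decide for which parameter values $\mathbf{c}$ these four quadratics fail to share a projective root over $\mathbb{F}$.

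Next I would carry this out family by family. For each representative $A_{i,3}(\mathbf{c})$ I would read off $A_1,A_2$ from the MSC, form the opposite matrices $A^o_1,A^o_2$, and write down the four forms; the $\overline{A}$-invariant lines are exactly their common roots. For the families with many free parameters (such as $A_{1,3}$) a single pair of the forms already pins the common root to a small set, and eliminating $(x:y)$ by a resultant produces the displayed inequalities ($\beta_1\neq 2\alpha_1+\alpha_2$ or $\alpha_4\neq-\alpha_1-2\alpha_2$, and the analogous conditions for $A_{2,3},A_{3,3},A_{4,3},A_{6,3}$). For the sparse families $A_{9,3}$ and $A_{10,3}$ many coefficients vanish, so several forms degenerate; there a common eigenvector, if it exists, is an $\mathbb{F}$-rational root of a single cubic (respectively of the product of a cubic and two quadratics), and the ``no root in $\mathbb{F}$'' hypotheses on $\beta_1-t^3$ and on $(\beta_1-t^3)(\beta_1 t^2+\beta_1 t+1)(\beta_1^2 t^3+\beta_1-2)$ are exactly the statements that no such rational invariant line exists. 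At this stage I would invoke Propositions 5.1 and 6.1 of \cite{ahmed1}, as the paper already suggests, to package these root-existence computations. Finally, since simplicity is an isomorphism invariant and \cite{bekbaev24} lists one representative per isomorphism class, the surviving representatives with the stated constraints give the complete list of two-dimensional simple algebras in characteristic $3$.

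The main obstacle I expect is the elimination step over a non-algebraically-closed field: deciding when four binary quadratics have no common $\mathbb{F}$-point is not simply the vanishing of one resultant, since a common projective root may be rational only over an extension of $\mathbb{F}$. Thus for the sparse families the condition becomes genuinely arithmetic---``a prescribed polynomial has no root in $\mathbb{F}$''---and one must check that the line cut out by the quadratics is $\mathbb{F}$-rational exactly when that polynomial acquires a linear factor over $\mathbb{F}$. The characteristic-$3$ arithmetic (the behaviour of $t\mapsto t^3$ and the collapse of certain coefficients) is precisely what makes these conditions diverge from the $Char(\mathbb{F})\neq 2,3$ case, and is where the computation must be done with the greatest care.
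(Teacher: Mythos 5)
Your route is the one the paper itself intends: this theorem is stated in the Appendix explicitly without proof, with the remark that it follows by combining the complete classification of all two-dimensional algebras over any field in \cite{bekbaev24} with Propositions 5.1 and 6.1 of \cite{ahmed1}. Your plan --- simplicity of $A=(A_1\vert A_2)$ is equivalent to the nonexistence of a common invariant line of $A_1,A_2,A^o_1,A^o_2$, i.e.\ to the nonexistence of a common zero in $\mathbb{P}^1(\mathbb{F})$ of the four quadratics $q_M(x,y)=\det[\,Mv\,\vert\, v\,]$, checked family by family against the characteristic-$3$ normal forms --- is exactly that computation spelled out, and the criterion, its reduction to common eigenvectors in dimension two, and the family-by-family elimination are all sound.

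There is, however, one concrete error to repair before a write-up: your reading of the sparse families $A_{9,3}$ and $A_{10,3}$. The ``has no root in $\mathbb{F}$'' hypotheses there are \emph{not} equivalent to the nonexistence of a rational invariant line; they are parametrization constraints inherited from the classification of \emph{all} two-dimensional algebras in \cite{bekbaev24} (when the polynomial acquires a root, $A_{9,3}(\beta_1)$, resp.\ $A_{10,3}(\beta_1)$, is isomorphic to a different representative of the list), which is why the same hypotheses reappear, with \emph{further} restrictions, in the characteristic-$3$ trivial-automorphism classification of the same Appendix. The conditions your quadratic-form computation actually produces are far weaker and purely algebraic: for $A_{10,3}$ one gets $q_{A_1}=-\beta_1x^2$ and $q_{A_2}=y^2$, so an invariant line (namely $\langle e_1\rangle$) exists iff $\beta_1=0$; for $A_{9,3}$ one gets $q_{A_1}=y^2-\beta_1x^2$ and $q_{A_2}=y(2x+y)$, so an invariant line exists iff $\beta_1=0$ or $\beta_1=1$. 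These are implied by the no-root hypotheses ($\beta_1=0$ makes $t=0$ a root of $\beta_1-t^3$, and $\beta_1=1$ makes $\beta_1t^2+\beta_1t+1=(t-1)^2$ in characteristic $3$), so your final list is not damaged; but the converse fails --- over $\mathbb{F}_3$ the algebra $A_{10,3}(1)$ has no invariant line and hence is simple, while $1-t^3=(1-t)^3$ has the root $t=1$ --- so the equivalence you assert is false: that algebra is excluded from the list not because it has an ideal but because it is isomorphic to another representative. For the same reason your closing worry is misplaced: the simplicity conditions never become ``genuinely arithmetic'' here; every arithmetic condition in the statement is carried over from the classification, and a correct proof must keep the two kinds of constraints separate --- those defining the family in \cite{bekbaev24}, and the algebraic inequations your eigenvector computation adds on top.
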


One can use the above presented results to describe elements of $\mathcal{A}^*_{2}$
up to isomorphism. For example, it is here in $Char.(\mathbb{F})\neq 2,3$ case. 
\begin{corollary} In $Char.(\mathbb{F})\neq 2,3$ case any two-dimensional algebra, which is simple, has only trivial derivation and trivial automorphism, is isomorphic to only one of the following such algebras given by MSC representatives	\begin{itemize}
		\item	$ A_{1}(\mathbf{c})=\left(
		\begin{array}{cccc}
			\alpha_1 & \alpha_2 &1+\alpha_2 & \alpha_4 \\
			\beta_1 & -\alpha_1 & 1-\alpha_1 & -\alpha_2
		\end{array}\right),\ \mbox{where}\ \mathbf{c}=(\alpha_1, \alpha_2, \alpha_4, \beta_1) \in \mathbb{F}^4,$ $\beta_1\neq 2\alpha_1+\alpha_2$ or $\alpha_4\neq -\alpha_1-2\alpha_2$,
		\item	$ A_{2}(\mathbf{c})=\left(
		\begin{array}{cccc}
			\alpha_1 & 0 & 0 & \alpha_4 \\
			1& \beta_2 & 1-\alpha_1&0
		\end{array}\right),\ \mbox{where}\ \mathbf{c}=(\alpha_1,\alpha_4,\beta_2)\in \mathbb{F}^3, \alpha_4\neq 0,$	
		\item $ A_{4}(\mathbf{c})=\left(
		\begin{array}{cccc}
			0 & 1 & 1 & 0 \\
			\beta _1& \beta_2 & 1&-1
		\end{array}\right),\ \mbox{where}\ \mathbf{c}=(\beta_1, \beta_2)\in \mathbb{F}^2;$  $\beta _2\neq 1$, or $\beta _1\neq -\frac{1}{4}, \beta _2= 1$,
		\item $ A_{6}(\mathbf{c})=\left(
		\begin{array}{cccc}
			\alpha_1 & 0 & 0 & \alpha_4 \\
			1& 1-\alpha_1 & -\alpha_1&0
		\end{array}\right),\ \mbox{where}\ \mathbf{c}=(\alpha_1,\alpha_4)\in \mathbb{F}^2;\ \alpha_4\neq 0;$
		\item $ A_{8}(\mathbf{c})=\left(
		\begin{array}{cccc}
			0 & 1 & 1 & 0 \\
			\beta _1& 1 & 0&-1
		\end{array}\right),\ \mbox{where}\ \mathbf{c}=\beta_1\in\mathbb{F};$ 
		\item $A_{10}(\mathbf{c})=\left(
		\begin{array}{cccc}
			0 &1 & 1 &1 \\
			\beta_1 &0 &0 &-1
		\end{array}
		\right),$ where polynomial\\ $(\beta _1t^3-3t-1)(\beta_1t^2+\beta_1t+1)(\beta_1^2t^3+6\beta_1t^2+3\beta_1t+\beta_1-2)$ has no root in $\mathbb{F}$ and none of the following equations $\beta_1=\frac{(2d+1)^2}{d^2+d+1}$, $\beta_1=\frac{(2d+1)^2(d-1)}{d^3}$ has a solution in $\mathbb{F}$, 
		\item $A_{11}(\mathbf{c})=\left(
		\begin{array}{cccc}
			0 &0 & 0 &1 \\
			\beta_1 &0 &0 &0
		\end{array}
		\right),$ where polynomial $\beta _1 -t^3$ has no root in $\mathbb{F}$,  $\mathbf{c}=\beta_1\neq 0$ and $d^3=1$ if and only if $d=1$.
	\end{itemize}
\end{corollary}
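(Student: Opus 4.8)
The plan is to obtain the corollary purely by intersecting the three complete classifications already established in this section, rather than by re-deriving anything from scratch. The first observation I would make explicit is that each of the three defining properties — simplicity, triviality of $Der(\mathbb{A})$, and triviality of $Aut(\mathbb{A})$ — is invariant under isomorphism: if $\phi:\mathbb{A}\to\mathbb{B}$ is an isomorphism then $Der(\mathbb{B})=\phi\,Der(\mathbb{A})\,\phi^{-1}$ and $Aut(\mathbb{B})=\phi\,Aut(\mathbb{A})\,\phi^{-1}$, while the ideals of $\mathbb{A}$ and $\mathbb{B}$ correspond bijectively. Hence ``being simple with only trivial derivation and automorphism'' is a property of the isomorphism class. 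Since Theorems 7.1, 7.4 and 7.7 each list, up to isomorphism, the classes enjoying respectively trivial derivation, trivial automorphism, and simplicity, all expressed through the same family representatives $A_i(\mathbf{c})$ of the master classification of two-dimensional algebras from \cite{bekbaev24}, the classes we want are exactly those lying in all three lists. The proof then reduces to computing this intersection family by family.

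The second step is the selection of surviving families. A family $A_i$ contributes to the intersection precisely when it occurs in all three of Theorems 7.1, 7.4 and 7.7. Scanning the lists, $A_1,A_2,A_4,A_6,A_8,A_{10},A_{11}$ appear in each, whereas $A_3,A_7$ and $A_{12}$ are absent from the trivial-automorphism list (Theorem 7.4); since distinct families represent distinct isomorphism classes, every algebra in these three families carries a nontrivial automorphism and is therefore discarded. This already reproduces exactly the seven families displayed in the corollary. The third step is to intersect the parameter constraints within each surviving family. For $A_1,A_2,A_4,A_6,A_8$ the derivation and automorphism conditions of Theorems 7.1 and 7.4 are vacuous (the whole family is both derivation- and automorphism-rigid), so the only surviving restriction is the simplicity condition recorded in Theorem 7.7, which is precisely what the corollary states. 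For $A_{10}$ and $A_{11}$ the situation is reversed: the automorphism condition of Theorem 7.4 is strictly stronger than the derivation and simplicity conditions, adding the exclusions $\beta_1=\frac{(2d+1)^2}{d^2+d+1}$, $\beta_1=\frac{(2d+1)^2(d-1)}{d^3}$ for $A_{10}$, and the requirement that $d^3=1$ force $d=1$ for $A_{11}$, on top of the common ``polynomial has no root'' and $\beta_1\neq 0$ constraints; the conjunction therefore collapses onto the automorphism condition, again matching the corollary.

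Finally, the ``isomorphic to only one'' uniqueness is inherited for free: restricting a complete system of pairwise non-isomorphic representatives to a subfamily preserves pairwise non-isomorphism, so no merging of classes occurs. The step I expect to require the most care is the parameter bookkeeping for $A_{10}$ and $A_{11}$ — verifying that the simplicity and derivation conditions impose nothing beyond the automorphism condition and that the several polynomial-nonvanishing hypotheses are mutually consistent — together with confirming that the dropped families $A_3,A_7,A_{12}$ contain no admissible tuple secretly isomorphic to a surviving representative. The latter, however, is guaranteed by the completeness and mutual exclusivity of the three underlying classifications, so the only genuine work is the elementary comparison of the listed constraints.
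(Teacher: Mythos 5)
Your proposal is correct and takes essentially the same approach as the paper: the paper offers no separate proof of this corollary, deriving it (as indicated by the sentence ``One can use the above presented results to describe elements of $\mathcal{A}^*_{2}$ up to isomorphism'') precisely by intersecting the classifications of Theorems 7.1, 7.4 and 7.7. Your family-by-family intersection --- discarding $A_3$, $A_7$, $A_{12}$ via their absence from the trivial-automorphism list, keeping the simplicity constraints for $A_1$, $A_2$, $A_4$, $A_6$, $A_8$ and the stronger automorphism constraints for $A_{10}$, $A_{11}$ --- supplies exactly the bookkeeping the paper leaves to the reader.
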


One of the interesting questions is the description the elements, up to isomorphism, of the sets  $\mathcal{A}_{n,triv.der}\setminus \mathcal{A}_{n,triv.aut}$ and $\mathcal{A}_{n,triv.aut}\setminus\mathcal{A}_{n,triv.der}$.	

Relying on the classification results of this section one can answer the above question, in $n=2$ case, in the following way. 

\begin{theorem} In $Char.(\mathbb{F})\neq 2,3$ case the inclusion			$$\mathcal{A}_{2,triv.aut}\subset\mathcal{A}_{2,triv.der}$$ is valid		and the difference $\mathcal{A}_{2,triv.der}\setminus\mathcal{A}_{2,triv.aut}$ consists of 
	algebras isomorphic to one of the following algebras: 
	\begin{itemize}
		\item	$ A_{3}(\mathbf{c})=\left(
		\begin{array}{cccc}
			\alpha_1 & 0 & 0 & \alpha_4 \\
			0& \beta_2 & 1-\alpha_1&0
		\end{array}\right),$ where $\mathbf{c}=(\alpha_1,\alpha_4, \beta_2)\in \mathbb{F}^3$, $\alpha_4\neq 0$, 
		\item	$ A_{7}(\mathbf{c})=\left(
		\begin{array}{cccc}
			\alpha_1 & 0 & 0 & \alpha_4 \\
			0&1-\alpha_1 & -\alpha_1&0
		\end{array}\right),$ where $\mathbf{c}=(\alpha_1,\alpha_4)\in \mathbb{F}^2$,\ $\alpha_4 \neq 0$,
		\item	$A_{10}(\mathbf{c})=\left(
		\begin{array}{cccc}
			0 &1 & 1 &1 \\
			\beta_1 &0 &0 &-1
		\end{array}
		\right),$ where polynomial\\ $(\beta _1t^3-3t-1)(\beta_1t^2+\beta_1t+1)(\beta_1^2t^3+6\beta_1t^2+3\beta_1t+\beta_1-2)$ has no root in $\mathbb{F}$ and at least one of the following equations $\beta_1=\frac{(2d+1)^2}{d^2+d+1}$, $\beta_1=\frac{(2d+1)^2(d-1)}{d^3}$ has a solution in $\mathbb{F}$,
		\item	$A_{11}(\mathbf{c})=\left(
		\begin{array}{cccc}
			0 &0 & 0 &1 \\
			\beta_1 &0 &0 &0
		\end{array}
		\right),$ where polynomial $\beta _1 -t^3$ has no root in $\mathbb{F}$, $\mathbf{c}=\beta_1\neq 0$, and there exists $d\neq 1$ for which $d^3=1$,
		\item	$A_{12}(\mathbf{c})=\left(
		\begin{array}{cccc}
			0 & 1 & 1 &0  \\
			\beta_1 &0& 0 &-1
		\end{array}
		\right),\ \mbox{where}\ \mathbf{c}=\beta_1\in \mathbb{F},$ $\beta_1 \neq 0.$\
	\end{itemize}
	In  
	$Char.(\mathbb{F})= 2$ case the difference $\mathcal{A}_{2,triv.der}\setminus\mathcal{A}_{2,triv.aut}$ consists of 
	algebras isomorphic to one of the following algebras: \begin{itemize}
		\item	$ A_{2,2}(\mathbf{c})=\left(
		\begin{array}{cccc}
			\alpha_1 & 0 & 0 & \alpha_4 \\
			1& \beta_2 & 1+\alpha_1&0
		\end{array}\right),\ \mbox{where}\ \mathbf{c}=(\alpha_1,\alpha_4, \beta_2)\in \mathbb{F}^3,\\ \alpha_4= 0,$ $\beta_2\neq 1$, 
		\item
		$A_{4,2}(\beta_1,\beta_2)$, where $\beta_1,\beta_2\in \mathbb{F}$, $\beta_2\neq 1$;
		\item $A_{5,2}(1,0)$;
		\item $A_{6,2}(\alpha_1,0)$, where $\alpha_1\in \mathbb{F}$, $\alpha_1\neq 1$;\
		\item $A_{7,2}(\alpha_1,\beta_1)$, where $\alpha_1,\beta_1\in \mathbb{F}$, $\alpha_1\neq 1$;\
		\item $A_{8,2}(\beta_1)$, where polynomial $(\beta _1t^3+t+1)(\beta _1t^2+\beta_1t+1)$ has no root in $\mathbb{F}$ and at least one of the equations $\beta_1=\frac{1}{d^2+d+1}$, $\beta_1=\frac{d+1}{d^3}$ has a solution in $\mathbb{F}$;\
		\item $A_{9,2}(\beta_1)$, where $\mathbf{c}=\beta_1\in\mathbb{F},$ polynomial $\beta_1+t^3$ has no root in $\mathbb{F}$ and there exists $d\neq 1$ for which $d^3=1$;\
		\item  $A_{10,2}(\beta_1)$, where $\mathbf{c}=\beta_1\in\mathbb{F}.$ \end{itemize} 
	In  
	$Char.(\mathbb{F})= 2$ case	the difference	 $\mathcal{A}_{2,triv.aut}\setminus\mathcal{A}_{2,triv.der}$ consists 
	algebra isomorphic to one of the following algebras: \begin{itemize}
		\item
		$A_{3,2}(\alpha_1,\alpha_4,\beta_2)$, where $\alpha_1,\alpha_4, \beta_2\in \mathbb{F}$, $\alpha_4\neq 0$;\\
		$A_{3,2}(\alpha_1,0,0)$, where $\alpha_1\in \mathbb{F}$,		
		if $\mathbb{F}=\mathbb{F}_2$;\ 
		\item $A_{4,2}(\alpha_1,\beta_1,1)$, where $\alpha_1,\beta_1\in \mathbb{F}$;\
		\item $A_{7,2}(1,\beta_1)$, where $\beta_1\in \mathbb{F}$,
		\item	$A_{11,2}(0)$, if $\mathbb{F}=\mathbb{F}_2$.	
	\end{itemize}
	In  
	$Char.(\mathbb{F})= 3$ case the difference $\mathcal{A}_{2,triv.der}\setminus\mathcal{A}_{2,triv.aut}$ consists of 
	algebras isomorphic to one of the following algebras: \begin{itemize}
		\item
		$A_{3,3}(\alpha_1,\alpha_4,\beta_2)$, where $\alpha_1,\alpha_4, \beta_2\in \mathbb{F}$, $\alpha_4\neq 0$;\
		\item $A_{7,3}(\alpha_1,\alpha_4)$, where $\alpha_1,\alpha_4\in \mathbb{F}$, $\alpha_4\neq 0$ 		 
	\end{itemize} In  
	$Char.(\mathbb{F})= 3$ case the difference $\mathcal{A}_{2,triv.aut}\setminus\mathcal{A}_{2,triv.der}$ consists 
	algebra isomorphic to one of the following algebras: \begin{itemize}
		\item $A_{9,3}(\beta_1)$, where polynomial $(\beta _1 -t^3)(\beta _1t^2+\beta _1t+1)(\beta_1^2t^3+\beta_1-2)$ has no root in $\mathbb{F}$ and the  equation $\beta_1=\frac{(2d+1)^2}{d^2+d+1}$ has has no solution in $\mathbb{F}$,
		\item	$A_{10,3}(\mathbf{c})=\left(
		\begin{array}{cccc}
			0 &0 & 0 &1 \\
			\beta_1 &0 &0 &0
		\end{array}
		\right),$\ where polynomial $\beta_1 -t^3$ has no root, $\mathbf{c}=\beta_1\neq 0$ and $d^3=1$ if and only if $d=1$.
	\end{itemize} 
\end{theorem}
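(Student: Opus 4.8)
The plan is to reduce the entire statement to a direct comparison of the two classification lists already established in this section, since both are expressed in terms of the \emph{same} canonical representatives $A_i(\mathbf{c})$ coming from the complete classification of two-dimensional algebras in \cite{bekbaev24}. Concretely, every two-dimensional algebra is isomorphic to exactly one canonical MSC $A_i(\mathbf{c})$ with $\mathbf{c}$ ranging over a prescribed parameter set; write $R_i^{\mathrm{der}}$ (respectively $R_i^{\mathrm{aut}}$) for the subset of those parameters for which $A_i(\mathbf{c})$ has only trivial derivation (respectively only trivial automorphism). The preceding theorems of the Appendix give $R_i^{\mathrm{der}}$ and $R_i^{\mathrm{aut}}$ explicitly, family by family. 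Because the canonical form of a given algebra is the same regardless of which property we test, membership of an algebra in $\mathcal{A}_{2,triv.der}\setminus\mathcal{A}_{2,triv.aut}$ is equivalent to its canonical parameter lying in $R_i^{\mathrm{der}}\setminus R_i^{\mathrm{aut}}$ for the appropriate $i$, and symmetrically for the reverse difference. Thus the whole proof is the computation of these set differences.

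For $Char.(\mathbb{F})\neq 2,3$ I would first establish the inclusion $\mathcal{A}_{2,triv.aut}\subset\mathcal{A}_{2,triv.der}$ by checking $R_i^{\mathrm{aut}}\subseteq R_i^{\mathrm{der}}$ for every family. The families appearing in the trivial-automorphism list, namely $A_1,A_2,A_4,A_6,A_8,A_{10},A_{11}$, form a subcollection of those in the trivial-derivation list; and on each common family the constraints defining $R_i^{\mathrm{aut}}$ are exactly those defining $R_i^{\mathrm{der}}$ together with \emph{additional} solvability restrictions (the equations in $d$ attached to $A_{10}$ and the cube-root-of-unity condition attached to $A_{11}$), so $R_i^{\mathrm{aut}}\subseteq R_i^{\mathrm{der}}$ is immediate. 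The difference $\mathcal{A}_{2,triv.der}\setminus\mathcal{A}_{2,triv.aut}$ then splits into two contributions: the families present in the derivation list but absent from the automorphism list, which are $A_3,A_7,A_{12}$ in their entirety, and the parameter complements $R_{10}^{\mathrm{der}}\setminus R_{10}^{\mathrm{aut}}$ and $R_{11}^{\mathrm{der}}\setminus R_{11}^{\mathrm{aut}}$, obtained by \emph{negating} the extra solvability conditions (so that at least one of the displayed equations in $d$ now has a solution, respectively a nontrivial cube root of unity now exists). This is precisely the list in the statement.

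For $Char.(\mathbb{F})=2$ and $Char.(\mathbb{F})=3$ the same region-subtraction is carried out, but now the inclusion fails and both differences must be described. In characteristic $2$ the derivation list contains $A_{10,2}$ while the automorphism list contains $A_{3,2}$ and $A_{11,2}$, so each list has families the other lacks; moreover several common families carry \emph{complementary} parameter conditions (for instance $A_{4,2}$ requires $\beta_2\neq 1$ for trivial derivation but $\beta_2=1$ for trivial automorphism, and likewise $A_{7,2}$ splits along $\alpha_1\neq 1$ versus $\alpha_1=1$), while others differ only by a weaker disjunctive constraint (as with $A_{2,2}$, $A_{5,2}$, $A_{6,2}$) or by the same solvability negations as before ($A_{8,2}$, $A_{9,2}$). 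Assembling $R_i^{\mathrm{der}}\setminus R_i^{\mathrm{aut}}$ and $R_i^{\mathrm{aut}}\setminus R_i^{\mathrm{der}}$ over all families yields the two stated lists; characteristic $3$ is analogous, with $A_{3,3},A_{7,3}$ unique to the derivation side and $A_{9,3},A_{10,3}$ unique to the automorphism side, the five common families contributing nothing because their parameter regions coincide.

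The conceptually routine but lengthy part is the bookkeeping of parameter regions; the genuine obstacles are two. First, one must correctly negate the auxiliary solvability conditions in the variable $d$: these conditions encode exactly when an algebra already possessing only trivial derivation nonetheless admits a nontrivial automorphism, so the passage from ``no solution'' to ``at least one solution'' is what carves out the relevant slice of each common family, and getting these negations right (including the cube-root-of-unity clauses) is the delicate step. Second, over the two-element field there are isolated exceptional algebras ($A_{3,2}(\alpha_1,0,0)$, $A_{6,2}(0,0)$, $A_{11,2}(0)$) that sit outside the generic parameter patterns and must be tracked separately to avoid double-counting or omission in the differences. Once these two points are handled, the stated classification of both set-differences follows directly from the family-by-family comparison.
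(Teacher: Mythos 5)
Your proposal is correct and follows essentially the same route as the paper: the paper states this theorem without a written proof, remarking only that it follows ``relying on the classification results of this section,'' i.e.\ precisely the family-by-family comparison of the trivial-derivation lists (Theorems 7.1--7.3) against the trivial-automorphism lists (Theorems 7.4--7.6), using the fact that both are expressed in the same canonical representatives $A_i(\mathbf{c})$ of \cite{bekbaev24}, so the differences reduce to the parameter-region subtractions you describe. Your identification of the two delicate points --- negating the auxiliary solvability conditions in $d$, and tracking the isolated $\mathbb{F}_2$-exceptions --- is exactly the content that the paper's implicit proof takes for granted.
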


\end{document}